\newtheorem{thm}{Theorem}[section]
\newtheorem{proposition}[thm]{Proposition}
\newtheorem{corollary}[thm]{Corollary}
\newtheorem{lem}[thm]{Lemma}
\newtheorem{nota}[thm]{Notation}
\theoremstyle{definition}
\newtheorem{dfn}[thm]{Definition}
\theoremstyle{remark}
\newtheorem{rem}[thm]{Remark}
\newtheorem{remark}[thm]{Remark}
\newtheorem{example}[thm]{Example}
\newtheorem{exam}[thm]{Example}
\newcommand{\bbC}{\mathbb C}
\newcommand{\bbR}{\mathbb R}
\newcommand{\bbS}{\mathbb S}
\newcommand{\bbZ}{\mathbb Z}
\newcommand{\bj}{\mathbf j}
\newcommand{\bs}{\mathbf s}
\newcommand{\bt}{\mathbf t}
\newcommand{\bu}{\mathbf u}
\newcommand{\bm}{\mathbf m}
\newcommand{\bi}{\mathbf i}
\newcommand{\cD}{\mathcal D}
\newcommand{\cG}{\mathcal G}
\newcommand{\cH}{\mathcal H}
\newcommand{\cT}{\mathcal T}
\newcommand{\cJ}{\mathcal J}
\newcommand{\cK}{\mathcal K}
\newcommand{\fU}{\mathfrak U}
\newcommand{\rA}{A}
\newcommand{\rD}{D}
\newcommand{\rE}{E}
\newcommand{\rG}{G}
\newcommand{\rM}{M}
\newcommand{\rS}{S}
\newcommand{\rU}{U}
\newcommand{\tto}{\rightrightarrows}
\DeclareMathOperator{\Forall}{\forall}
\DeclareMathOperator{\Ker}{Ker}
\DeclareMathOperator{\ind}{ind}
\DeclareMathOperator{\id}{id}
\DeclareMathOperator{\ch}{ch}
\begin{document}

\author[Yu Qiao]{Yu Qiao}

\address{School of Mathematics and Statistics, Shaanxi Normal University,
Xi'an, 710119, Shaanxi, China} \email{yqiao@snnu.edu.cn}

\author[Bing Kwan So]{Bing Kwan So}
\address{School of Mathematics, Jilin University,
Changchun, 130023, Jilin, China} \email{bkso@graduate.hku.hk}

\thanks{{\em Mathematics Subject Classification} (2020): 58J20, 46L80, 19K56, 58H15}
\thanks{ {\em Keywords}: $K$-theoretic index, index map, deformation from the pair groupoid, Lie algebroid, boundary groupoid, submanifold groupoid}


\date{\today}

\title[Deformation and $K$-theoretic Index Formulae]{Deformation and $K$-theoretic Index Formulae on Boundary Groupoids}

\begin{abstract}
Motivated by investigating $K$-theoretic index formulae for boundary groupoids of the form 
$$\cH = M_0 \times M_0 \sqcup G \times M_1 \times M_1 \tto M = M_0 \sqcup M_1,$$ 
where $G$ is an exponential Lie group, we introduce the notion of a 
{\em deformation from the pair groupoid}, which makes sense for general Lie groupoids. 
Once there exists a deformation from the pair groupoid for a (general) Lie groupoid $\cG \tto M$, 
we are able to construct explicitly a deformation index map relating the analytic index on $\cG$ and the index on the pair groupoid $M \times M$, 
which in turn enables us to establish index formulae for (fully) elliptic (pseudo)-differential operators on $\cH$ by applying the numerical index formula of 
M. J. Pflaum, H. Posthuma, and X. Tang.
In particular, we find that the index is given by the Atiyah-Singer integral but does not involve any eta term in the higher codimensional cases.
These results recover and generalize our previous results for renormalizable boundary groupoids via renormalized traces.
\end{abstract}

\maketitle

\tableofcontents

\section{Introduction}

The Atiyah-Singer index theorem is one of the greatest mathematics achievements in the twentieth century,
which states that the analytic index of an elliptic differential operator is equal to
its topological counterpart.

There have been many results generalizing the Atiyah-Singer index theorem to
other pseudo-differential calculi, constructed for different purposes
\cite{Rouse;BlupInd,Nistor;LieMfld,Bohlen;Rev1,Debord;FiberedCorners,Debord;Blup,M'bert;CornerGroupoids,NWX;GroupoidPdO}.
These pseudo-differential calculi can be realized as groupoid pseudo-differential calculi on certain groupoid $\cG \rightrightarrows \rM$ with $\rM$ compact.
Then the sub-algebras of operators of order zero and $-\infty$ can be completed to $C^*$-algebras
$\fU (\cG)$ and $C^* (\cG)$, respectively, which yields the short exact sequence
\begin{equation}
\label{Exact}
0 \to C^* (\cG) \to \fU (\cG) \to \fU / C^* (\cG) \to 0.
\end{equation}
After passing to the six terms exact sequence, one defines the analytic index of any elliptic operator
to be its image in $K _0 (C^* (\cG))$ under the boundary map.
For continuous family groupoids, the analytic index is similarly defined by Lauter, Monthubert, and Nistor in \cite{Nistor;Family}.

In the 1990s, Connes gives a ``Lie groupoid proof'' of the Atiyah-Singer index theorem in his
famous book \cite[Section 2]{Connes;Book}.
As the starting point, given a closed manifold $M$,
Connes constructed the so-called {\em tangent groupoid}, which is a kind of {\em deformation} of the pair groupoid $M \times M$.
This construction induces a map (which we shall denote by $\ind _{\cT (\rM \times \rM)}$ below)
from the $K_0$-group of the tangent bundle to
$$K_0 (C^* (\rM \times \rM)) \cong K_0 (\cK),$$
where $\cK$ denotes the $C^*$-algebra of compact operators. 
Applying a mapping cone arguments as described in, for example, \cite[Section 3]{Rouse;BlupInd} and \cite[Section 4.1]{So;Ktheory},
one proves that the index map corresponding to \eqref{Exact} is equal to $\ind _{\cT (\rM \times \rM)}$ composed with the principal symbol.
Next, Connes constructed a topological index map, by embedding $\rM $ into $\bbR^N$ for some large enough $N$,
and considering Thom isomorphism and Morita equivalence (between groupoids).
Then he showed that these two index maps coincide.

Connes' proof of the Atiyah-Singer index theorem triggered a large number of subsequent works in index theory through Lie groupoids.
For instance, based on the analysis and index problems on manifolds with corners, many groupoids were constructed 
\cite{Rouse;BlupInd, Rouse;CornerFredholm, Monthubert;BoundaryInd, Debord;FiberedCorners, Monthubert;BdK, Nistor;TopIndCorn, Nistor;SingularSpaces}.
These constructions and index theorems depend heavily on the existence of boundary defining functions and embedding the manifold under question into a cube instead of $\bbR ^N$.
Meanwhile, Androulidakis and Skandalis associate the holonomy groupoid to a singular foliation and investigated corresponding properties \cite{Skandalis;HoloGpd},
which opens the door to explore singular foliations via Lie groupoids.
Alternatively, one can study the simplified index problem on Lie groupoids by pairing with cohomology 
\cite{Monthubert;BdCoh, Connes;FoliationInd, Tang;Ind, Tang;GpdAction, Pflaum;GpoidLocalizedInd}.

To study the Fredholm index of fully elliptic operators on manifolds with boundary,
Carrillo-Rouse, Lescure, and Monthubert replace the tangent bundle in the adiabatic groupoid construction 
by some ``non-commutative tangent bundle'' \cite{Monthubert;BdCoh}.
In that case, it is (the $C^*$-algebra of) a sub-groupoid of the adiabatic groupoid of 
the $b$-stretched product groupoid associated to a manifold $\rM$ with boundary $\rM _1$, explicitly:
$$ (\rM_1 \times \rM_1 \times \bbR) \times (0, 1) \sqcup T \rM \times \{ 0 \} \rightrightarrows \rM \times [0, 1] $$
(it is a continuous family groupoid).
Similar arguments are utilized by Debord, Lescure, and Nistor for the case of conical pseudo-manifolds \cite{Nistor;ConicalInd}.

In this paper, we consider index formulae for pseudo-differential operators on Lie groupoids $\cH \tto M$ with two orbits,
which is isomorphic as an abstract groupoid (i.e., a set with groupoid operations) to
\begin{equation}\label{BoundaryGPD}
M_0 \times M_0 \sqcup G \times M_1 \times M_1 \rightrightarrows M_0 \sqcup M_1 = M,
\end{equation}
where $M_0 = M \setminus M_1$ is an open dense subset, 
and $G \times M_1 \times M_1 \rightrightarrows M_1$ is the product of the pair groupoid 
$M_1 \times M_1$ and the Lie group $G$ as a groupoid over a single point.
We shall further assume that the isotropy group $G$ is an exponential Lie group,
of dimension equal to the codimension of the manifold $M_1$ in $M$.
\begin{dfn}
In this paper, we shall call these Lie groupoids \emph{boundary groupoid with two leaves and exponential isotropy}.
Throughout the paper, we shall use $\cH \tto M$ to denote such a groupoid.
\end{dfn}

Note that a groupoid of the form (\ref{BoundaryGPD})
may carry different Lie groupoid structures, and we shall regard them as different objects
(see Proposition \ref{Existence} and Example \ref{Counter} below). 
These Lie groupoids are the simplest example of boundary groupoids \cite{So;FullCal},
and are often holonomy groupoids integrating some singular foliations
\cite{Nistor;Riem,Nistor;LieMfld,Skandalis;SingFoliation1,Skandalis;SingFoliation2,Debord;IntAlgebroid}.
In this case, it is not clear whether an embedding analogous to the manifold with corners exists.
Fortunately, the $K$-theory of these Lie groupoid $C^*$-algebras is computed in \cite{So;Ktheory}. 
Namely for {\em all} boundary groupoids with two leaves and exponential isotropy $\cH \tto M$, we have 
\begin{align*}
K _0 (C^* (\cH)) \cong & \bbZ, & K _1 (C^* (\cH)) \cong & \bbZ && \text{if $M_1$ is of odd codimension $\geq 3$,} \\
K _0 (C^* (\cH)) \cong & \bbZ \oplus \bbZ, & K _1 (C^* (\cH)) \cong & \{ 0 \} && \text{if $M_1$ is of even codimension,}
\end{align*}
regardless of Lie structures (see \eqref{Comp2} below). 
Hence in order to derive an index formula for elliptic operators (or just their principal symbols), 
it suffices to produce one integer in the odd codimension case and two integers in the even codimension case,
which would completely describe its index in $K_0 (C^* (\cH))$.
Moreover, the pushforward induced by the {\em extension map}
$$ K _0 (C^* (M_0 \times M_0)) \xrightarrow{\varepsilon _{\cH , M_0} } K _0 (C^* (\cH)) $$
is an isomorphism in the odd case and is injective in the even case.
This implies the Fredholm index of a fully elliptic operator on $\cH$ is also determined by its $K_0 (C^* (\cH))$ index.

In the special case when a renormalized trace can be defined, one can derive an index formula \cite{QS;RenormInd}
using renormalization techniques similar to that of \cite{Nistor;Hom2}.
It was found that in the odd case with codimension $\geq 3$, the $\eta$-term of the renormalized index formula vanishes,
hence both the Fredholm and $K_0 (C^* (\cH))$ index is just given by the Atiyah-Singer integral.
Moreover, one could expect a deeper description of the relationship between the
isomorphism $K(C^*(M \times M)) \cong K(C^*(\cH))$ and the vanishing of eta term,
that we shall prove in this paper.


In this paper, we take a different approach. 
Denote by $\cG \tto M$ a {\em general} Lie groupoid with unit space $M$.
We shall introduce the notion of a {\em deformation from the pair groupoid} (See Definition \ref{deformation}),
where we deform the pair groupoid $M \times M$ to our desired groupoid $\cG\tto M$.
Here, we would like to point out two major differences between our definition and
that for the tangent groupoid and the adiabatic groupoid, even for the construction of the deformation to the normal cone
\cite{Debord;AdiabaticGpd, Debord;GpdAction,Debord;Blup}:
\begin{enumerate}
\item For the tangent groupoid, the fiber at $t=0$ is the tangent bundle of $M$; whereas,
for a deformation from the pair groupoid, the fiber at $t=0$ is the groupoid $\cG$, usually {\em not} a vector bundle.
\item Given a closed manifold $M$, there always is the associated tangent groupoid. But a deformation from the pair groupoid for $\cG\tto M$ may not exist.
(Thus, we exploit a little obstruction theory for the existence of a deformation from the pair groupoid.)
\end{enumerate}
Hence, unlike the tangent groupoid and adiabatic groupoid, our idea seems to go backwards.
That is, given a Lie groupoid $\cG \tto M$,
we ask if $\cG\tto M$ can be obtained by deforming the pair groupoid $M\times M$,
in other words, we are looking for a bigger Lie groupoid $\cD$ which realizes such deformation.

For a Lie groupoid $\cG\tto M$, once a deformation $\cD$ from the pair groupoid $M \times M$ exists,
we shall construct a deformation index map
$$\ind _\cD : K_0 (C^* (\cG)) \to K_0 (C^*(M \times M \rightrightarrows \rM)) \cong \bbZ.$$
which is the key ingredient to establish the following theorem.

\begin{thm}[Theorem \ref{Main1Pf}]\label{Main1}
Let $\cG\tto M$ be a Lie groupoid and $A$ the associated Lie algebroid of $\cG$.
Suppose that a deformation $\cD$ from the pair groupoid $M \times M$ exists. Then one has the commutative diagram
\begin{equation}
\xymatrix@C=7em{
K_0 (C^* (\rA))\ar[r]^{\ind _{\cT (\cG)}} \ar[d]^{\cong} & K_0 (C^* (\cG)) \ar[d]^{\ind _\cD} \\
K_0 (C^* (T \rM ))\ar[r]^{\ind _{\cT (\rM \times \rM)}} & K_0 (C^* (\rM \times \rM)),
}
\end{equation}
where the top map is just the analytic index map constructed via the adiabatic groupoid and the bottom map is the Atiyah-Singer index.
\end{thm}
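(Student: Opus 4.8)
The plan is to exhibit the commutative diagram as a consequence of functoriality of the adiabatic/tangent groupoid construction with respect to the anchor map $\rho \colon A \to TM$, together with the identification of $\ind_\cD$ with the map induced on $K$-theory by restricting the deformation groupoid $\cD$ to its ``boundary at $t=0$''. First I would set up the adiabatic groupoid $\cT(\cG) = A \times \{0\} \sqcup \cG \times (0,1] \tto M \times [0,1]$ and recall that $\ind_{\cT(\cG)}$ is the boundary map of the semi-split exact sequence associated to the evaluation-at-$0$ restriction, precomposed with the (invertible) evaluation-at-$1$; similarly for $\cT(M\times M)$, whose fibre at $t=0$ is $TM$. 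The anchor $\rho$ induces a morphism of Lie groupoids $\cT(\cG) \to \cT(M\times M)$ over $M\times[0,1]$ (identity on the base): on $(0,1]$ it is the anchor/target-source map $\cG \to M\times M$, and at $t=0$ it is $\rho \colon A \to TM$ itself. This is precisely the structure that makes the adiabatic deformation functorial, and it yields the left square of the diagram once we recall that $\ind_{\cT(\cG)}$ and $\ind_{\cT(\rM\times\rM)}$ are the induced maps.

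Next I would bring in the deformation $\cD$ from the pair groupoid. By Definition~\ref{deformation}, $\cD \tto M \times [0,1]$ has $\cD|_{t=0} = \cG$ and $\cD|_{(0,1]} = (M\times M)\times(0,1]$, and $\ind_\cD$ is by construction the composite $K_0(C^*(\cG)) \to K_0(C^*(\cD|_{[0,1)})) \xleftarrow{\cong}$ (the connecting/restriction maps of the evaluation sequence for $\cD$), landing in $K_0(C^*(M\times M))$ after evaluation at $t=1$. The key observation is that $\cD$ is itself a Lie groupoid to which the adiabatic construction applies, giving $\cT(\cD) \tto M\times[0,1]\times[0,1]$ with two commuting ``deformation directions'': the $\cD$-direction (parameter of $\cD$) and the adiabatic $t$-direction. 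Evaluating $\cT(\cD)$ along the two faces of the square $[0,1]\times[0,1]$ in the two possible orders produces exactly the two composites in the statement: restricting first in the $\cD$-direction and then doing adiabatic deformation gives the clockwise route $\ind_{\cT(\cG)}$ then $\ind_\cD$, while restricting first adiabatically and then in the $\cD$-direction gives $\ind_\cD$ (now on the $TM$-side, where it is the tangent-groupoid index $\ind_{\cT(\rM\times\rM)}$ precomposed with $\rho$-pushforward, i.e. $\ind_{\cT(\rM\times\rM)}$ after the vertical isomorphism $K_0(C^*(A))\cong K_0(C^*(TM))$) then $\ind_{\cT(\rM\times\rM)}$. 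The commutativity of the square is then the statement that these two orders agree, which follows from the associativity/commutativity of iterated deformation-to-the-normal-cone constructions.

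Concretely, I would carry this out in the following steps: (1) construct the groupoid morphism $\cT(\cG)\to\cT(M\times M)$ from the anchor and check it is a Lie groupoid homomorphism over $M\times[0,1]$, inducing on $K$-theory the left vertical $\cong$ at $t=0$ and compatible maps on the full adiabatic algebras; (2) show this morphism is compatible with evaluation at $t=0$ and $t=1$, so that the induced $K$-theory map intertwines the two $\ind_{\cT(-)}$ boundary maps --- this is the ``left square''; (3) identify $\ind_\cD$ with the $K$-theory map induced by the inclusion of the $t=0$ fibre into a suitable mapping-cone/ideal of $C^*(\cD)$, exactly as in the mapping-cone argument recalled in the introduction (cf. \cite{Rouse;BlupInd}, \cite{So;Ktheory}); (4) assemble the double deformation $\cT(\cD)$ and read off both composites as restrictions of $\cT(\cD)$ to the two pairs of adjacent faces of the parameter square, invoking the fact that $C^*$ of a Lie groupoid sends such iterated restrictions to the expected compositions of boundary maps (naturality of the six-term sequence); (5) conclude commutativity, and finally observe that under $K_0(C^*(A))\cong K_0(C^*(TM))$ (Morita/Thom, since $\rho$ is an isomorphism over the dense orbit and both sides are computed by the same symbol data) the bottom composite is literally the Atiyah--Singer index $\ind_{\cT(\rM\times\rM)}$.

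The main obstacle I anticipate is step (4): making precise that $\cT(\cD)$ is a genuine Lie groupoid (one must check the deformation parameter of $\cD$ and the adiabatic parameter interact smoothly, i.e. that the two gluings are compatible at the corner $t=s=0$ where the fibre degenerates to $A$ with its algebroid structure), and that its various face-restrictions have the $C^*$-algebras one expects, so that the naturality of the connecting maps can be applied. A secondary difficulty is verifying that the anchor-induced morphism $\cT(\cG)\to\cT(M\times M)$ genuinely induces the claimed vertical isomorphism $K_0(C^*(A))\cong K_0(C^*(TM))$ rather than merely a map; here one uses that $A$ and $TM$ have the same $K$-theory because the deformation $\cD$ exists (this is where the hypothesis is really used), i.e. the obstruction-theoretic input from the existence of $\cD$ forces $\rho$ to be a $K$-orientation-preserving quasi-isomorphism on the relevant level. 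Everything else --- functoriality of adiabatic groupoids, the mapping-cone identification of $\ind_\cD$, naturality of the six-term sequence --- is standard and can be cited.
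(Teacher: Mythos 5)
Your step (4) --- forming the adiabatic groupoid $\cT(\cD)$ of the deformation $\cD$ and reading both composites off the restrictions of $C^*(\cT(\cD))$ to the faces of the parameter square, with the evaluation-at-$0$ maps invertible on $K$-theory because the complementary open pieces have contractible $C^*$-algebras --- is exactly the paper's proof. Your anticipated ``main obstacle'' there is not a real one: $\cD$ is by hypothesis a Lie groupoid, so $\cT(\cD)$ is simply its adiabatic groupoid and no extra smoothness check at the corner is required; likewise the mapping-cone identification in step (3) is unnecessary, since $\ind_\cD$ is defined directly as $r_{\cD,\rM\times\{1\}}\circ r_{\cD,\rM\times\{0\}}^{-1}$ on $K$-theory.

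The genuine problem is in steps (1)--(2) and in your identification of the left vertical isomorphism. The morphism $(\bt,\bs)\colon\cG\to \rM\times \rM$, and its adiabatic extension $\cT(\cG)\to\cT(\rM\times \rM)$, does not induce $*$-homomorphisms of groupoid $C^*$-algebras: $C^*(-)$ is not functorial for such morphisms, and in the cases of interest (boundary groupoids) $(\bt,\bs)$ fails to be a submersion along the singular orbit, while the anchor $\rho\colon \rA\to T\rM$ drops rank on $M_1$; so $\rho$ cannot ``induce'' the isomorphism $K_0(C^*(\rA))\cong K_0(C^*(T\rM))$, and the vertical map is not a ``$\rho$-pushforward'' --- if such a pushforward existed the index comparison would be trivial and the deformation $\cD$ superfluous. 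In the paper the vertical isomorphism has a different source: the face of $\cT(\cD)$ at adiabatic time $0$ is (the $C^*$-algebra of) the Lie algebroid $\tilde \rA$ of $\cD$, and by the proof of Proposition \ref{LieAlgIso} one has $\tilde \rA\cong \rA\times[0,1]$ as a vector bundle, so the two evaluations $r_{\tilde \rA,\rM\times\{0\}}$ and $r_{\tilde \rA,\rM\times\{1\}}$ are $K$-isomorphisms whose composite $r_{\tilde \rA,\rM\times\{1\}}\circ r_{\tilde \rA,\rM\times\{0\}}^{-1}$ is precisely the identification $\rA\cong T\rM$ of vector bundles. Replacing your anchor-based ``left square'' by this column of the restriction diagram for $C^*(\cT(\cD))$ closes the argument; as written, steps (1), (2) and your ``secondary difficulty'' paragraph rest on maps that do not exist.
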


The above theorem can be used to simplify index problems on Lie groupoids to those on the pair groupoid of the unit space. 
In particular, for a boundary groupoid with two orbits and exponential istropy, 
we have the following theorem, which identifies index maps on such groupoids.

\begin{thm}[Theorem \ref{Main2Pf}]\label{Main2}
Suppose that $\cH \tto M$ is any boundary groupoid with two orbits and exponential istropy, 
and a deformation from the pair groupoid exists for $\cH$. Then the composition
$$ K _0 (C^* (M_0 \times M_0)) \cong \bbZ \xrightarrow{\varepsilon _{\cH, M_0} }
K _0 (C^* (\cH)) \xrightarrow{\ind _{\cD}} K_0 (C^* (M \times M)) \cong \bbZ $$
is an isomorphism.
\end{thm}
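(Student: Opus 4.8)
The plan is to reduce Theorem~\ref{Main2} to Theorem~\ref{Main1} together with the $K$-theory computations recalled from \cite{So;Ktheory}. First I would observe that the composition in question is a group homomorphism $\bbZ \to \bbZ$, so it suffices to show that it is surjective (equivalently, that it sends a generator to $\pm 1$, or at least to a nonzero integer together with a separate argument that it is onto). The natural candidate for a generator of $K_0(C^*(M_0 \times M_0)) \cong K_0(\cK) \cong \bbZ$ is the class of a rank-one projection, or more usefully for us the image under $\ind_{\cT(M_0 \times M_0)}$ of the Bott generator / the symbol class of an elliptic operator on $M_0$ whose Atiyah-Singer index is $1$ (for instance a suitable order-zero operator built from a degree-one map to a sphere).

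The key steps, in order, are as follows. Step one: unwind the definition of $\varepsilon_{\cH, M_0}$ and identify how it interacts with the adiabatic/deformation-to-the-normal-cone index $\ind_{\cT(-)}$; specifically, I would use the naturality of the adiabatic groupoid construction with respect to the inclusion of the open saturated subset $M_0 \hookrightarrow M$, so that $\varepsilon_{\cH, M_0} \circ \ind_{\cT(M_0 \times M_0)}$ factors through $\ind_{\cT(\cH)}$ applied to symbol classes supported over $M_0$. Step two: apply Theorem~\ref{Main1} with $\cG = \cH$ to get the commuting square relating $\ind_{\cT(\cH)}$, $\ind_{\cD}$, $\ind_{\cT(A)}$ restricted appropriately, and $\ind_{\cT(M \times M)}$, the Atiyah-Singer map. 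Step three: chase the generator of $K_0(C^*(M_0 \times M_0))$ around the resulting enlarged diagram: it maps to a class in $K_0(C^*(\cH))$, thence under $\ind_\cD$ to $K_0(C^*(M \times M)) \cong \bbZ$, and by commutativity this equals the Atiyah-Singer index of the corresponding symbol pushed forward from $M_0$ into $M$. Since $M_0$ is open and dense in $M$ and the symbol class can be taken compactly supported in $T M_0 \subset TM$, its Atiyah-Singer integral over $M$ equals its Atiyah-Singer integral over $M_0$, which is $1$ by choice of generator. Hence the composition sends a generator to $1$, proving it is an isomorphism.

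The main obstacle I expect is Step one: making precise the compatibility between the extension map $\varepsilon_{\cH, M_0}$ on $C^*$-algebra $K$-theory and the geometric deformation index maps. The extension map comes from the short exact sequence $0 \to C^*(M_0 \times M_0) \to C^*(\cH) \to C^*(G \times M_1 \times M_1) \to 0$ (an ideal-inclusion of the restriction to the open orbit), while $\ind_\cD$ is defined through the big deformation groupoid $\cD$; one must check that the restriction of $\cD$ over $M_0 \times [0,1]$ is (Morita equivalent to) the tangent/adiabatic groupoid of $M_0 \times M_0$, and that the evaluation-at-$t=1$ and evaluation-at-$t=0$ maps are compatible with the inclusion of ideals on both ends. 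This is essentially a diagram-chase at the level of groupoid $C^*$-algebras and their $K$-theory six-term sequences, using that all the relevant subgroupoids are open and saturated; once that bookkeeping is in place, the proof is concluded by the index-one computation above. A secondary, more routine point is to confirm that the identification $K_0(C^*(M \times M)) \cong \bbZ$ used in the target of $\ind_\cD$ matches the one in Theorem~\ref{Main1} so that ``Atiyah-Singer index'' literally means integration of $\ch(\sigma)\,\td(TM \otimes \bbC)$ over $T^*M$, after which density of $M_0$ finishes the argument.
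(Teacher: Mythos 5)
Your proposal is correct in its main line, but it takes a genuinely different and heavier route than the paper. The paper does not pass through Theorem \ref{Main1}, symbols, or any generator computation at all: it proves the stronger identity $\ind_\cD \circ \varepsilon_{\cH,M_0} = \varepsilon_{M\times M,M_0}$ directly, by observing that the reduction $\cD_{M_0\times[0,1]}$ is the \emph{constant} family $M_0\times M_0\times[0,1]$; one pulls a kernel $u\in C^*(M_0\times M_0)$ back to this cylinder, extends by zero to $\cD$, and notes that evaluation at $s=0$ and $s=1$ gives $\varepsilon_{\cH,M_0}(u)$ and $\varepsilon_{M\times M,M_0}(u)$ respectively; composing with $r_{\cD,M\times\{0\}}^{-1}$ yields the identity, and since $\varepsilon_{M\times M,M_0}$ sends a rank-one projection to a rank-one projection it is an isomorphism on $K_0$, which gives the theorem in two lines. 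What the paper's approach buys is precisely this stronger identity, which is reused later (the splitting of $K_0(C^*(\cH))$ via $\ind_\cD\oplus r_{\cH,M_1}$ in the even case and the Fredholm index corollary); your argument proves the stated isomorphism but not that identity. What your approach buys is a check of consistency with the Atiyah--Singer picture, at the cost of invoking Theorem \ref{Main1}, the naturality of the adiabatic index under the open invariant inclusion $M_0\subset M$ (your Step one, which is indeed provable by the same restriction/extension diagram chase the paper uses in the proof of Theorem \ref{Main1Pf}), and excision for compactly supported symbols on the open manifold $M_0$.

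Two wrinkles you should fix if you keep your route. First, your claim that the restriction of $\cD$ over $M_0\times[0,1]$ is (Morita equivalent to) the tangent/adiabatic groupoid of $M_0\times M_0$ is false: that restriction is $M_0\times M_0\times[0,1]$, whose fiber at $t=0$ is the pair groupoid, not $TM_0$, and it is not Morita equivalent to the tangent groupoid (compacts versus $C_0(T^*M_0)$ at $t=0$). This misstatement is not needed for your Steps one to three, which only use the adiabatic groupoids $\cT(\cH)$ and $\cT(M_0\times M_0)$ and the map $\ind_\cD$ through Theorem \ref{Main1}; ironically, the correct identification of $\cD_{M_0\times[0,1]}$ as a constant family is exactly the paper's proof. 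Second, the identification $K_0(C^*(A))\cong K_0(C^*(TM))$ in Theorem \ref{Main1} is induced by a bundle isomorphism $A\cong TM$ obtained from parallel transport, which over your small ball need not be fiberwise orientation-preserving; hence your Bott generator is sent to a class of Atiyah--Singer index $\pm 1$ rather than necessarily $+1$. This is harmless for concluding an isomorphism $\bbZ\to\bbZ$, but the sign should be acknowledged.
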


As applications, we apply the pairing considered in \cite{Pflaum;GpoidLocalizedInd} to obtain our index formulae
on boundary groupoids with two leaves and exponential isotropy.

\begin{thm}[Theorems \ref{OddCase} and \ref{EvenCase}]
Suppose that $\cH \tto M$ is any boundary groupoid with two orbits and exponential istropy, 
and a deformation from the pair groupoid exists for $\cH$.
Let $\varPsi$ be an elliptic pseudo-differential operator on $\cH$.
One has the index formulae:
\begin{enumerate}
\item if $\dim G \geq 3$ is odd, then
$$
\ind (\varPsi) = \ind _{\cT (M \times M)} (\partial [\sigma (\varPsi) ])
= \int _{T^* M} \big\langle \hat A (T ^* M) \wedge \ch (\sigma [\varPsi ]) , \Omega _{\pi^! T M} \big\rangle;
$$
\item if $\dim G $ is even, then 
\begin{align*}
\ind (\varPsi)
=& \int _{T^* M} \big\langle \hat A (T ^* M) \wedge \ch (\sigma [\varPsi ]) , \Omega _{\pi^! T M} \big\rangle \\ \nonumber
& \bigoplus \int _{T^* M_1 \times \mathfrak g} \big\langle \hat A (T ^* M_1 \oplus \mathfrak g)
\wedge \ch (\sigma [\varPsi ] \vert _{M_1}) , \Omega' _{\pi^! T M_1} \big\rangle,
\end{align*}
where $\mathfrak{g}$ is the Lie algebra of $G$.
\end{enumerate}
\end{thm}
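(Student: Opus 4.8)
The plan is to feed Theorems~\ref{Main1} and~\ref{Main2} into the numerical index formula of Pflaum--Posthuma--Tang \cite{Pflaum;GpoidLocalizedInd}. Write $A$ for the Lie algebroid of $\cH$. The analytic index of $\varPsi$ is the image of the symbol class under the adiabatic (tangent) groupoid construction, $\ind(\varPsi) = \ind_{\cT(\cH)}(\partial[\sigma(\varPsi)])$ with $\partial[\sigma(\varPsi)] \in K_0(C^*(A))$; via the left-hand isomorphism of Theorem~\ref{Main1} we view it in $K_0(C^*(TM)) \cong K^0(T^*M)$ as the usual symbol class $\sigma[\varPsi]$. Applying $\ind_\cD$ and the commutative diagram of Theorem~\ref{Main1} with $\cG = \cH$ gives
\[
\ind_\cD\big(\ind(\varPsi)\big) = \ind_{\cT(M\times M)}\big(\partial[\sigma(\varPsi)]\big) \in K_0(C^*(M\times M)) \cong \bbZ,
\]
and the numerical formula of \cite{Pflaum;GpoidLocalizedInd}, specialized to the pair groupoid $M\times M$ of the closed manifold $M$ (trivial isotropy, so it recovers the Atiyah--Singer theorem), evaluates the right-hand side as $\int_{T^*M}\langle \hat A(T^*M)\wedge\ch(\sigma[\varPsi]), \Omega_{\pi^! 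TM}\rangle$.

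In the odd codimension $\geq 3$ case, $K_0(C^*(\cH)) \cong \bbZ = \operatorname{im}\varepsilon_{\cH,M_0}$, so by Theorem~\ref{Main2} the map $\ind_\cD : K_0(C^*(\cH)) \to K_0(C^*(M\times M)) \cong \bbZ$ is itself an isomorphism (consistently, the restriction $K_0(C^*(\cH)) \to K_0(C^*(\cH|_{M_1})) = K_0(C^*(G))$ vanishes because $G$ is odd-dimensional exponential). Hence $\ind(\varPsi)$ is determined by $\ind_\cD(\ind(\varPsi))$, which by the previous paragraph equals $\ind_{\cT(M\times M)}(\partial[\sigma(\varPsi)])$; fixing the universal sign by orientation and using the remarks after \eqref{Comp2} (which also identify this integer with the Fredholm index when $\varPsi$ is fully elliptic) yields the formula of case (1).

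For the even case, $K_0(C^*(\cH)) \cong \bbZ\oplus\bbZ$, $K_1(C^*(\cH)) = 0$. From the six-term sequence of $0\to C^*(M_0\times M_0)\to C^*(\cH)\to C^*(\cH|_{M_1})\to 0$, with $C^*(\cH|_{M_1}) \cong C^*(G)\otimes\cK(L^2(M_1))$ and $K_*(C^*(G)) \cong K_*(\bbR^{\dim G})$ by Connes' Thom isomorphism (so $K_1 = 0$ here), one gets a short exact sequence $0\to\bbZ \xrightarrow{\varepsilon_{\cH,M_0}} K_0(C^*(\cH)) \xrightarrow{r_{M_1}} K_0(C^*(\cH|_{M_1})) \to 0$, where $r_{M_1}$ is restriction. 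Since $\ind_\cD$ is an isomorphism on $\ker r_{M_1} = \operatorname{im}\varepsilon_{\cH,M_0}$ by Theorem~\ref{Main2} and $r_{M_1}$ is onto, the pair $(\ind_\cD, r_{M_1})$ is an isomorphism $K_0(C^*(\cH)) \xrightarrow{\cong} \bbZ\oplus\bbZ$. Its first coordinate applied to $\ind(\varPsi)$ is computed exactly as in the odd case, giving the first summand $\int_{T^*M}\langle\hat A(T^*M)\wedge\ch(\sigma[\varPsi]),\Omega_{\pi^!TM}\rangle$. For the second coordinate, $r_{M_1}(\ind(\varPsi))$ is the index class of the restricted elliptic operator $\varPsi\vert_{M_1}$ on $G\times M_1\times M_1$, whose Lie algebroid is $TM_1\oplus\mathfrak g$; identifying $C^*(G\times M_1\times M_1) \cong C^*(G)\otimes\cK(L^2(M_1))$ and pairing with the trace on $C^*(G)$ coming from a Haar measure (under Connes' Thom isomorphism $C^*(G)$ carries the same $K$-theory and canonical trace as $C_0(\mathfrak g^\ast)$, since $G$ is exponential), the formula of \cite{Pflaum;GpoidLocalizedInd} applied to $G\times M_1\times M_1$ produces $\int_{T^*M_1\times\mathfrak g}\langle\hat A(T^*M_1\oplus\mathfrak g)\wedge\ch(\sigma[\varPsi]\vert_{M_1}),\Omega'_{\pi^!TM_1}\rangle$. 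Because both coordinates are pairings of pure Atiyah--Singer type and together form a complete set of invariants of $\ind(\varPsi) \in K_0(C^*(\cH))$, there is no room for an odd-degree correction, which is why no $\eta$-term appears (for a fully elliptic $\varPsi$ the Fredholm index is the first summand, the second vanishing by full ellipticity); this also recovers and extends \cite{QS;RenormInd}.

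The main obstacle is the second coordinate in the even case: one must track the Morita equivalence $C^*(G\times M_1\times M_1)\sim C^*(G)$ and Connes' Thom isomorphism for the exponential group $G$ compatibly with the cyclic cocycle used in \cite{Pflaum;GpoidLocalizedInd}, pin down the orientation and the density $\Omega'_{\pi^!TM_1}$, and verify that this pairing produces no lower-order contribution — in contrast to the renormalized-trace computation of \cite{QS;RenormInd}, where a single functional detects the sum of the Atiyah--Singer integral and a correction term. A secondary technical point, needed in both cases, is to justify the identity $\ind(\cdot) = \ind_{\cT(\cdot)}\circ(\text{symbol})$ for the groupoids $\cH$ and $\cH\vert_{M_1}$ via the mapping-cone argument recalled in the introduction; this is expected to go through as in \cite{Rouse;BlupInd, So;Ktheory}.
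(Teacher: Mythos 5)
Your proposal follows essentially the same route as the paper: reduce via the commutative diagram of Theorem \ref{Main1} to the analytic index on the pair groupoid $M\times M$, use Theorem \ref{Main2} (and the $K$-theory computation of $C^*(\cH)$) to see that $\ind_\cD$, respectively $\ind_\cD\oplus r_{\cH,M_1}$, is a complete invariant, identify the second component in the even case with $\ind_{\cT(\cH_1)}(\partial[\sigma]\vert_{M_1})$ on the regular unimodular groupoid $G\times M_1\times M_1$, and evaluate both components by the Pflaum--Posthuma--Tang formula with suitably normalized invariant densities $\Omega$, $\Omega'$. The compatibility issues you flag (restriction to $M_1$ commuting with the adiabatic construction, normalization of $\Omega'$ via Haar measure, and $\ind=\ind_{\cT}\circ\partial\circ\sigma$) are handled in the paper exactly as you anticipate, by the restriction diagram for $\cT(\cH)$ and by citation to \cite{So;Ktheory}.
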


Again, it is interesting to observe that the above index formulae do not depend on the Lie structure of the boundary groupoid in question.

\subsection{Structure of the paper} Section 2 is devoted to reviewing basic definitions and facts related to Lie groupoids,
such as Lie algebroids, boundary groupoids, submanifold groupoids, the tangent groupoid.
In Section \ref{deformation gpd}, given a Lie groupoid $\cG \tto M$,
we define the notion of {\em a deformation from the pair groupoid} to $\cG$,
show that such deformation exists for submanifold groupoids,
and briefly discuss obstruction to existence of such deformation groupoids.
Then, under the assumption that a deformation $\cD$ from the pair groupoid to $\cG$ exists,
we construct explicitly an index map
$$\ind _\cD : K_0 (C^* (\cG)) \to K_0 (C^*(\rM \times\rM)) \cong \bbZ,$$
which is similar to that of the tangent (or adiabatic) groupoid,
and show that it is compatible with the analytic index map, hence implies Theorem \ref{Main1}.
Finally in Section \ref{FredholmFully}, after reviewing the Atyah-Singer formula appearing in \cite{Pflaum;GpoidLocalizedInd},
we combine these results with Theorem \ref{Main1} to establish index formulae
for (fully) elliptic (pseudo)-differential operators on boundary groupoids with two orbits and exponential isotropy $\cH \tto M$. 
These index formulae are essentially the Atiyah-Singer formula,
hence we give a $K$-theoretic proof of the results in \cite{QS;RenormInd} without using a renormalized trace.

\smallskip
\textbf{Acknowledgment.}
We would like to thank Prof. Xiang Tang for explaining his paper \cite{Pflaum;GpoidLocalizedInd} to us.
We also thank the anonymous referee for a careful reading of the manuscript and many suggestions that have improved it substantially.
Qiao was partially supported by the NSFC (grant nos. 11971282, 12271323).

\medskip
\section{Preliminaries}\label{basics}

\subsection{Lie groupoids, Lie algebroids, Pseudodifferential operators on Lie groupoids, and groupoid $C^*$-algebras}
We first review some basic knowledge of Lie groupoids, Lie algebroids, pseudodifferential calculus
on Lie groupoids, and groupoid $C^*$-algebras \cite{Nistor;LieMfld, Nistor;Family, Nistor;GeomOp, MacBook87, MacBook05, MonPie, NWX;GroupoidPdO, RenBook80, So;FullCal}.

\begin{dfn}
A Lie groupoid $\cG \tto M$ consists of the following data:
\begin{enumerate}
\item[(i)]
 manifolds $M$, called the space of units, and $\cG$;
\item[(ii)]
 a unit map (inclusion) $\bu: M \rightarrow G$;
\item[(iii)]
submersions $\bs,\bt: \cG \rightarrow M$, called the source and target maps respectively, satisfying
$$\bs \circ \bu= \id_M =\bt\circ \bu ;$$
\item[(iv)]
 a multiplication map $\bm: \cG^{(2)}:=\{(g, h)\in \cG \times \cG \, : \, \bs(g) = \bt(h)\} \rightarrow \cG$, $(g,h) \mapsto gh$, which
is associative and satisfies
$$\bs(gh)=\bs(h), \quad \bt(gh)=\bt(g), \quad g(\bu \circ \bs (g))=g =(\bu\circ \bt(g))g;$$
\item[(v)]
an inverse diffeomorphism $\bi: \cG \rightarrow \cG$, $g \mapsto g=g^{-1}$, such that $\bs(g^{-1})=\bt(g)$, $\bt(g^{-1})= \bs(g)$, and
$$g g^{-1}=\bu(\bt(g)), \quad g^{-1}g= \bu(\bs(g)).$$
\end{enumerate}
All maps above are assumed to be smooth.
\end{dfn}


\begin{dfn}
A {\em homomorphism} between Lie groupoids $\cG\tto M_1$ and $\cH\tto M_2$ is by definition 
a functor $\phi: \cG \rightarrow \cH$ which is smooth both on the unit space $M_1$ and on $\cG$.
Two Lie groupoids $\cG$ and $\cH$ are said to be {\em isomorphic} if there are homomorphisms
$\phi: \cG \rightarrow \cH$ and $\psi: \cH \rightarrow \cG$ such that $\psi\circ \phi$ and 
$\phi\circ \psi$ are identity homomorphisms on $\cG$ and $\cH$ respectively. 
\end{dfn}

Lie groupoids are closely related with Lie algebroids. Here we recall the definition.

\begin{dfn}
A \em{Lie algebroid} $A$ over a manifold $M$ is a vector bundle $A$
over $M$, together with a Lie algebra structure on the space
$\Gamma ^\infty (A)$ of the smooth sections of $A$ and a bundle map $\nu: A \rightarrow TM$, 
called the \emph{anchor} map, such that 
$$ [X,f Y]=f[X,Y]+(\nu(X)f)Y,$$
for all smooth sections $X$ and $Y$ of $A$ and any smooth function $f$ on $M$. 
\end{dfn}

Given a Lie groupoid $\mathcal{G}$ with units $M$,
we can associate a Lie algebroid $A(\cG)$ to $\cG$ as follows. (For more details, see \cite{MacBook87, MacBook05}.)
The $\bs$-vertical subbundle of $T\mathcal{G}$ for $\bs:\mathcal{G}\rightarrow M$ is denoted by $T^\bs(\mathcal{G})$ and called simply the $\bs$-vertical
bundle for $\mathcal{G}$. It is an involutive distribution on
$\mathcal{G}$ whose leaves are the components of the $\bs$-fibers of
$\mathcal{G}$. (Here involutive distribution means that
$T^\bs(\mathcal{G})$ is closed under the Lie bracket, i.e. if $X,Y \in
\mathfrak{X}(\mathcal{G})$ are sections of $T^\bs(\mathcal{G})$, then
the vector field $[X,Y]$ is also a section of $T^\bs(\mathcal{G})$.) Hence
we obtain
\begin{equation*}
  T^\bs\mathcal{G}= \Ker \bs_*=\displaystyle{\bigcup_{x\in       M}T\mathcal{G}_x}\subset T\mathcal{G}.
\end{equation*}
The {\em Lie algebroid} of $\cG$, denoted by $A(\mathcal{G})$ (or simply $A$ sometimes), is
defined to be $T^\bs(\mathcal{G})\big\vert_M$, the restriction of the
$\bs$-vertical tangent bundle to the set of units $M$. 
In this case, we say that $\cG$ {\em integrates} $A(\cG)$.

\begin{rem}
Given a Lie algebroid $\rA$, there may not exist a Lie groupoid integrating $\rA$.
When $\rA$ is almost regular (i.e., $\nu$ has constant rank on an open dense subset of $\rM$), 
Debord's quasi-graphoid construction can be used to integrate $\rA$ \cite{Debord;IntAlgebroid}.
If in particular $\rA$ is of the form as in (iv) of Definition \ref{Def} with $\mathfrak g _k$ solvable, 
then the resulting groupoid (which is a quasi-graphoid) is necessarily of the form
$$ \cG = (\rM_0 \times \rM _0) \sqcup \big( \sqcup _i \rM_i \times \rM_i \times \rG \big), $$
as (ii) of Definition \ref{Def}.
However Debord's quasi-graphoid is not always Hausdorff. 
Fortunately, the result of \cite{So;Ktheory} follows from the composition series (see \ref{Comp} below) and Thom isomorphism,
which only requires $\rG$ to be {\em locally Hausdorff} and $\rM$ being Hausdorff.

If, furthermore, all leaves of $\rA$ are simply connected, 
then Nistor's gluing construction \cite{Nistor;IntAlg'oid} integrates $\rA$ to a Hausdorff Lie groupoid $\cG$ with simply connected $\bs$-fibers.
Moreover uniqueness implies $\cG$ is again necessarily of the form as (ii) of Definition \ref{Def}.

Since we shall only consider almost regular Lie algebroids, 
we can always integrate them into Lie groupoids, where the $K$-theoretic calculation can be applied.
\end{rem}

\begin{exam}
The Lie algebroid of the pair groupoid $M \times M$ is the tangent bundle $TM$ with the usual Lie bracket on vector fields and the anchor map is the identity.
\end{exam}

%

Let $E \to M$ be a vector bundle.
Recall \cite{NWX;GroupoidPdO} that an $m$-th order pseudo-differential operator on $\cG$ is a right invariant,
smooth family $P = \{ P _x \} _{x \in M}$,
where each $P _x $ is an $m$-th order classical pseudo-differential operator on sections of $\bt ^* \rE \to \bs ^{-1} (x)$.
We denote by $\Psi^m (\cG, E)$ (resp. $\rD ^m (\cG , E)$) the algebra of uniformly supported,
order $m$ classical pseudo-differential operators (resp. differential operators).

Recall \cite{Nistor;Family} that one defines the strong norm for $P \in \Psi^0(\cG, E)$
$$ \| P \| := \sup _\rho \| \rho (P) \|, $$
where $\rho $ ranges over all bounded $*$-representations of $\Psi ^0 (\cG, E)$ satisfying
$$ \| \rho (P) \| \leq \sup _{x \in M}
\Big\{ \int _{\bs ^{-1} (x)} | \kappa _P (g) |\, \mu _x , \int _{\bs ^{-1} (x)} | \kappa _P (g ^{-1}) | \, \mu _x \Big\},$$
whenever $P \in \Psi ^{- \dim M - 1} (\cG, E)$ with (continuous) kernel $\kappa _P$.

\begin{dfn}
The $C^*$-algebras $\fU (\cG)$ and $C ^* (\cG)$ are defined to be the completion of $\Psi ^0 (\cG, E)$ and $\Psi ^{- \infty} (\cG, E)$ respectively
with respect to the strong norm $\| \cdot \| $.
\end{dfn}

One also defines the reduced $C^*$-algebras $\fU _r (\cG)$ and $C^* _r (\cG)$ by completing
$\Psi ^0 (\cG, E)$ and $\Psi ^{- \infty} (\cG, E)$) respectively with respect to the reduced norm
$$ \| P \| _r := \sup _{x \in M} \big\{ \| P _x \| _{L^2 (\bs ^{-1} (x))} \big\}.$$
Recall that if the strong and reduced norm coincide, then $\cG$ is called {\em (metrically) amenable},
which is the case for the groupoids we shall consider.


\smallskip
\subsection{Invariant submanifolds and composition series}
Let $\cG \tto M$ be a Lie groupoid.
\begin{dfn}
Let $S$ be any subset of $M$.
We denote by
$$\cG_S:=\bs ^{-1} (S) \cap \bt ^{-1} (S)$$
the {\em reduction} of $\cG$ to $S$.
The reduction $\cG _S$ is a sub-groupoid of $\cG$.
In particular, if $S=\{x\}$, then $\cG_x := \cG_S$ is called the {\em isotropy group} at $x$.

If $S \subseteq M$ is an embedded submanifold such that $ \bs^{-1} (S) = \bt ^{-1} (S) $,
we say that $S \subset M$ is an {\em invariant submanifold}.
\end{dfn}

\begin{dfn}\label{restrictionmap}
Given a closed invariant submanifold $S $ of $\cG$.
For any groupoid pseudo-differential operator $P = \{ P _x \} _{x \in M} \in \Psi ^m (\cG, E)$, we define the restriction
$$ P |_{\cG _{S}} (P) := \{ P _x \} _{x \in \cG _{S}} \in \Psi ^m (\cG _{S}, E).$$
Restriction extends to a map from $\fU (\cG) $ to $\fU (\cG _S)$ and also from $C^* (\cG)$ to $C^* (\cG _S)$.
We denote both such restriction maps, and also the induced $K$-group homomorphisms, by $r _{\cG , \rS}$.
\end{dfn}

\begin{nota}\label{notations}
Let $\cG \rightrightarrows M$ be a Lie groupoid, $\rU$ be an open subset of $\rM$.
Then $\cG _\rU := \bs ^{-1} (\rU) \cap \bt ^{-1} (\rU) \rightrightarrows \rU$ is an open sub-groupoid of $\cG$.
Any element in $C^* (\cG _\rU)$ extends to $C^* (\cG )$ by $0$.
We denote such extension map by $\varepsilon _{\cG , \rU}$.
It is a homomorphism of $C^*$ algebras and hence induces a map from $K_\bullet (C^* (\cG _\rU))$ to $K_\bullet (C^* (\cG))$,
which we shall still denote by $\varepsilon _{\cG , \rU}$.

\end{nota}


Now suppose we are given a groupoid $\cG \rightrightarrows M$ ($M $ not necessarily compact),
and invariant submanifolds $M_0 , M_1, \cdots M_r$,
such that their closures $\bar M_i$ are also invariant submanifolds that furthermore satisfies
$$ M = \bar{M _0} \supset \bar{M _1} \supset \cdots \supset \bar{ M _r }$$
(such setting is natural for boundary groupoids in Definition \ref{Def} below).
For simplicity we shall denote $\bar{\cG_i} := \cG_{\bar{M _i}}$.
Denote by $S A'$ the sphere sub-bundle of the dual of the Lie algebroid $A(\cG)$ of $\cG$. 

\begin{dfn}\label{FullyElliptic}
Let $\sigma : \Psi ^m (\cG) \to C^\infty (S A')$ denotes the principal symbol map.
For each $i = 1, \cdots , r$, define the {\em joint symbol maps}
\begin{equation}
\label{JointSym}
\bj _i : \Psi ^m (\cG) \to C^\infty (S A') \oplus \Psi ^m (\bar \cG _i),
\quad \bj _i (P ) := (\sigma (P ) , P |_{\bar \cG _i}).
\end{equation}
The map $\bj _i$ extends to a homomorphism from $\fU (\cG)$ to $C_0 (S A ^*) \oplus \fU (\bar \cG _i)$.

We say that $P \in \Psi ^m (\cG)$ is {\em elliptic} if $\sigma (P)$ is invertible,
and it is called {\em fully elliptic} if $\bj _1 (P )$ is invertible (which implies $P$ is elliptic).
\end{dfn}

\begin{dfn}
Denote by $\cJ _0 := \overline {\Psi ^{-1} (\cG)} \subset \fU (\cG)$,
and $\cJ _i \subset \fU (\cG), i = 1, \cdots , r$ the null space of $\bj _{r - i + 1}$.
\end{dfn}

By construction, it is clear that
$$ \cJ _0 \supset \cJ _1 \supset \cdots \supset \cJ _r .$$
Also, any uniformly supported kernels in $\Psi ^{- \infty} (\cG _{\bar M _i \setminus \bar M _j}, E |_{\bar M _i \setminus \bar M _j})$,
can be extended to a kernel in $\Psi ^{- \infty }  (\cG _{\bar M _i}, E |_{\bar M _i })$ by zero.
This induces a $*$-algebra homomorphism from $C^* (\cG _{\bar M _i \setminus \bar M _j} ) $ to $ C^* (\cG _{\bar M _i}) $.
We shall use the following key fact.

\begin{lem}\cite[Lemma 2 and Theorem 3]{Nistor;Family}
\label{Comp}
One has short exact sequences
\begin{align}
& 0 \to \cJ _{i+1} \to \cJ _i \to C ^* (\cG _{\bar M _i \setminus \bar M _{i +1}}) \to 0 , \\
& 0 \to C^* (\cG _{\bar M _i \setminus \bar M _j} ) \to C^* (\bar \cG _i) \to C^* (\bar \cG _j) \to 0, \quad \Forall j > i.
\end{align}
\end{lem}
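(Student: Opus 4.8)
The whole statement rests on a single device, which I would isolate first and call the \emph{restriction--extension sequence}: for a Lie (or continuous family) groupoid $\cK \tto N$ and an open invariant subset $V \subseteq N$ with closed invariant complement $Z = N \setminus V$, extension of uniformly supported kernels by zero embeds $C^*(\cK_V)$ into $C^*(\cK)$ as a closed two-sided ideal, the restriction map $r_{\cK, Z} \colon C^*(\cK) \to C^*(\cK_Z)$ is surjective, and
$$ 0 \to C^*(\cK_V) \to C^*(\cK) \xrightarrow{\ r_{\cK, Z}\ } C^*(\cK_Z) \to 0 $$
is exact. Both sequences in the lemma are obtained from this one by suitable choices of $\cK$, $V$, $Z$.

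To establish the device, the nonobvious points are: (i) extension by zero does not increase the strong norm, so that $C^*(\cK_V)$ sits inside $C^*(\cK)$ as a genuine closed ideal (multiplicativity under the groupoid product is clear since $\cK_V$ is an open subgroupoid); (ii) $r_{\cK,Z}$ is onto at the level of $C^*$-algebras: one checks that every uniformly supported smoothing kernel on $\cK_Z$ is the restriction of one on $\cK$ — extend it across a tubular neighbourhood of $Z$ and cut off — and that restriction does not increase the strong norm, so it descends to a surjection of the completions; (iii) exactness in the middle, i.e.\ that a class of $C^*(\cK)$ restricting to $0$ on $Z$ lies in the closure of the kernels supported in $\cK_V$, which follows from a partition-of-unity/approximate-unit argument supported away from $Z$. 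Because we are working with the strong (full) completion, this sequence is exact with no further hypothesis; metric amenability of the groupoids under consideration merely ensures that the reduced completion gives the same answer. This is precisely the content of the foundational estimates of Lauter, Monthubert, and Nistor that are cited, and verifying (i)--(iii) at the $C^*$-level — rather than at the purely algebraic level of uniformly supported pseudodifferential operators, where everything is elementary — is the only real work; I expect this to be the main obstacle.

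Granting the device, the second family is immediate: take $\cK = \bar\cG_i = \cG_{\bar M_i}$, $V = \bar M_i \setminus \bar M_j$ (open in $\bar M_i$ since $\bar M_j$ is closed and invariant there), and $Z = \bar M_j$; the ideal is $C^*(\cG_{\bar M_i \setminus \bar M_j})$ and the quotient is $C^*(\bar\cG_j)$, for all $j > i$. For the first family I would begin by identifying the filtration $\cJ_\bullet$. Since the principal symbol of any order $-1$ operator vanishes and $\Psi^{-\infty}(\cG)$ is dense in $\overline{\Psi^{-1}(\cG)}$ for the strong norm, one has $\cJ_0 = \overline{\Psi^{-1}(\cG)} = C^*(\cG) = \ker\bigl(\sigma \colon \fU(\cG) \to C_0(SA')\bigr)$. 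Consequently each joint-symbol kernel $\cJ_k = \ker \bj_\ell$, with $\bj_\ell = (\sigma,\ \cdot\,|_{\bar\cG_\ell})$, consists exactly of the elements of $C^*(\cG)$ vanishing on the closed invariant submanifold $\bar M_\ell$, that is $\cJ_k = \ker\bigl(r_{\cG,\bar M_\ell}\bigr) = C^*(\cG_{M \setminus \bar M_\ell})$ by the device applied to $\cG$ itself. Thus $\cJ_0 \supset \cJ_1 \supset \cdots \supset \cJ_r$ corresponds to the decreasing chain of open invariant sets $M \setminus \bar M_\ell$, and one more application of the device — inside $\cG_{M \setminus \bar M_{i+1}}$, with open invariant subset $M \setminus \bar M_i$ and closed complement the locally closed stratum $\bar M_i \setminus \bar M_{i+1}$ — yields $\cJ_i / \cJ_{i+1} \cong C^*(\cG_{\bar M_i \setminus \bar M_{i+1}})$, which is the first asserted sequence. (Here I am suppressing the evident reindexing of the filtration; the successive quotients are, in any case, the $C^*$-algebras of the reductions to the locally closed strata.)

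In summary, the entire proof reduces to the restriction--extension sequence together with bookkeeping against the chain $M = \bar M_0 \supset \bar M_1 \supset \cdots \supset \bar M_r$; the delicate point — controlling the strong norm under restriction and extension of kernels, so that the exact sequences survive completion — is exactly where the cited results of Lauter--Monthubert--Nistor enter.
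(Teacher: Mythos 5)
Your proposal is correct and is essentially the argument the paper itself relies on: the paper offers no proof of this lemma beyond the citation to Lauter--Monthubert--Nistor, whose Lemma 2 is exactly your restriction--extension sequence for an open invariant subset (the analytic points (i)--(iii) you isolate are precisely what is established there), and whose Theorem 3 is the identification of the joint-symbol ideals and their subquotients that you then carry out by bookkeeping along the chain $M=\bar M_0\supset\bar M_1\supset\cdots\supset\bar M_r$. Your remark about suppressing a reindexing is apt: with the paper's convention $\cJ_i=\ker\bj_{r-i+1}$ the successive quotients come out as $C^*(\cG_{\bar M_{r-i}\setminus\bar M_{r-i+1}})$, i.e.\ the displayed list read in reverse order, but that is a quirk of the statement's indexing rather than a gap in your argument.
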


\subsection{Boundary groupoids and submanifold groupoids}
In this paper we are interested in some more specific classes of groupoids.
To begin with, let us recall the definition of boundary groupoids in \cite{So;FullCal,So;PhD}.

\begin{dfn}\label{Def}
Let $\cG \rightrightarrows M$ be a Lie groupoid with $M$ compact.
We say that $\cG$ is a boundary groupoid if:
\begin{enumerate}
\item[(i)]
the singular foliation defined by the anchor map $\nu : A \to TM $ has finite number of leaves
$M _0 , M _1, \cdots , M _r \subset M$
(which are invariant submanifolds),
such that $\dim M = \dim M _0 > \dim M _1 > \cdots > \dim M _r $;
\item[(ii)]
For all $k = 0, 1, \cdots , r$, $\bar M _k := M _k \cup \cdots \cup M _r $ are closed submanifolds of $M$;
\item[(iii)]
For $k = 0$, $\cG _0 := \cG _{M _0}$ is the pair groupoid,
and for $k = 1, 2, \cdots , r$, we have $\cG _k := \cG _{M _k} \cong G _k \times M _k \times M _k $ for some Lie group $G _k$;
\item[(iv)]
For each $k = 0, 1, \cdots , r$, there exists an unique sub-bundle $\bar A _k \subset A |_{\bar M _k}$ such that
$\bar A _k |_{M _k} = \ker (\nu |_{M _k}) $ ($= \mathfrak g _k \times M _k$).
\end{enumerate}
\end{dfn}

Boundary groupoids are closely related to Fredholm groupoids and blowup groupoids.
Roughly speaking, Fredholm groupoids are those on which Fredholmness of a pseudodifferential operator is completely characterized by its
ellipticity and invertibility at the boundary.
For the definition and basic properties of Fredholm groupoids, one may consult \cite{Nistor;Fredholm1, Nistor;Fredholm2,Come19}.
The basic result relevant to our discussion is that boundary groupoids are often amenable and Fredholm groupoids.

\begin{lem}
(See \cite[Lemma 7]{Nistor;GeomOp})
For any boundary groupoid of the form $\cG = (M _0 \times M _0) \cup (\bbR ^q \times M _1 \times M _1)$,
$$ C^* (\cG) \cong C^* _r (\cG).$$
In other words, $\cG$ is metrically amenable.
\end{lem}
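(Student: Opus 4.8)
The plan is to obtain metric amenability of $\cG$ as a consequence of topological amenability of the groupoid, together with the stability of amenability under extensions by open invariant subsets. Recall that a (locally compact, locally Hausdorff) groupoid with Haar system that is topologically amenable satisfies $C^*(\cG) \cong C^*_r(\cG)$; since metric amenability in the sense used here is precisely the equality of the strong and reduced norms, hence of $C^*(\cG)$ and $C^*_r(\cG)$, it suffices to prove that $\cG = (M_0 \times M_0) \cup (\bbR^q \times M_1 \times M_1)$ is amenable as a groupoid.

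First I would record the decomposition underlying the composition series: $M_0 \subset M$ is an open dense invariant submanifold and $M_1 = M \setminus M_0$ is a closed invariant submanifold, so $\cG$ is an extension of the reduction $\cG_{M_1}$ by the open saturated piece $\cG_{M_0}$. The reduction $\cG_{M_0} = M_0 \times M_0$ is the pair groupoid of $M_0$, which is amenable: it is proper, and in any case $C^*(M_0 \times M_0) \cong \cK(L^2(M_0))$ already coincides with its reduced completion. The reduction $\cG_{M_1} = \bbR^q \times (M_1 \times M_1)$ is the Cartesian product of the Lie group $\bbR^q$, amenable because it is abelian, with the pair groupoid $M_1 \times M_1$, which is amenable; and a product of amenable groupoids is amenable.

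Next I would invoke the theorem of Anantharaman-Delaroche and Renault that amenability is stable under extensions: if $U \subseteq M$ is open and invariant and both $\cG_U$ and $\cG_{M \setminus U}$ are amenable, then $\cG$ is amenable. Applying this with $U = M_0$ yields amenability of $\cG$, and hence $C^*(\cG) \cong C^*_r(\cG)$, i.e. $\cG$ is metrically amenable. As an alternative, more hands-on route one can instead use the short exact sequence of full $C^*$-algebras from Lemma \ref{Comp} (with the two end terms $\cK(L^2(M_0))$ and $C^*(\bbR^q) \otimes \cK(L^2(M_1))$, both of which equal their reduced counterparts), but converting this into the equality for $\cG$ itself still requires the same stability input in the guise of exactness of the corresponding reduced sequence.

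The step I expect to be the main obstacle is precisely this passage between the analytic notion of metric amenability and the measure-theoretic/topological amenability to which the extension stability theorem applies, and verifying that the decomposition of $\cG$ into the open pair piece and the closed product piece genuinely meets the hypotheses of that theorem — in particular in the case where $\cG$ fails to be Hausdorff, where one must argue locally or first reduce to the Hausdorff situation. Once these inputs are secured, the verification that each subquotient is amenable is routine, so the lemma reduces to citing the extension stability of amenability.
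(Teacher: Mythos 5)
Your proof is essentially correct, but note that the paper itself offers no argument for this lemma: it is imported wholesale from \cite[Lemma 7]{Nistor;GeomOp}, and the proof there is of the kind you describe only as your alternative route, namely a $C^*$-algebraic comparison along the composition series of Lemma \ref{Comp}: the ideal $C^*(\cG_{M_0})\cong\cK$ and the quotient $C^*(\cG_{M_1})\cong C^*(\bbR^q)\otimes\cK(L^2(M_1))$ each coincide with their reduced counterparts because $\bbR^q$ is amenable, and exactness of the corresponding reduced sequence (again a consequence of amenability of the boundary reduction) forces the full and reduced norms on $\cG$ to agree. Your primary route is genuinely different and in fact proves more: topological amenability of the two reductions (the pair groupoid being proper, and $\bbR^q\times M_1\times M_1$ a product of amenable factors) together with the Anantharaman-Delaroche--Renault stability of amenability under extensions by open invariant subsets gives topological amenability of $\cG$ itself, from which $C^*(\cG)\cong C^*_r(\cG)$ follows; the paper only uses topological amenability piecewise, quoting it for $\bbR^q\times M_1\times M_1$ right after the lemma in order to apply \cite[Theorem 4.3]{Come19} for Fredholmness. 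Your identification of ``metric amenability'' with equality of full and reduced completions is also correct, since the strong norm of the paper (the supremum over representations bounded by the $I$-norm) is the usual full norm. The Hausdorffness caveat you flag is the right one to worry about, but it is harmless in the situation of the lemma: the groupoids treated in \cite{Nistor;GeomOp} are Hausdorff Lie groupoids, and for second countable Hausdorff groupoids with Haar system all the amenability inputs you invoke are standard; in the merely locally Hausdorff generality the paper occasionally allows, one would indeed fall back on the exact-sequence argument or on the non-Hausdorff versions of these results, as you indicate.
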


Moreover, since the additive group $\bbR^q$ is amenable, the product groupoid $\bbR ^q \times M _1 \times M _1$ is topologically amenable.
By \cite[Theorem 4.3]{Come19}, we have the following proposition.

\begin{proposition}
The boundary groupoid $\cG = (M _0 \times M _0) \cup (\bbR ^q \times M _1 \times M _1)$ is a Fredholm groupoid.
\end{proposition}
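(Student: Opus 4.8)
The plan is to deduce the statement directly from \cite[Theorem 4.3]{Come19}, so that the real task is to check that $\cG$ satisfies its hypotheses. That theorem asserts, under the standing regularity assumptions (a Hausdorff Lie — or continuous family — groupoid with compact unit space), that such a groupoid is Fredholm provided it has a dense open orbit whose reduction is the pair groupoid and the complementary ``boundary'' reduction is topologically amenable. So the proof will have exactly two ingredients to supply, one for the interior orbit and one for the boundary.

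First I would identify the dense open orbit. By Definition \ref{Def}, $M_0 \subset M$ is an open invariant submanifold whose complement $M_1$ is a closed submanifold of strictly smaller dimension, hence $M_0$ is dense in $M$; and item (iii) of that definition gives $\cG_{M_0} = M_0 \times M_0$, precisely the pair groupoid. Compactness of $M$, also part of Definition \ref{Def}, together with the (continuous family / Lie) groupoid structure, supplies the remaining regularity hypotheses required in \cite{Come19}.

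Next I would handle the boundary. The only other orbit is $M_1$, with reduction $\cG_{M_1} = \bbR^q \times M_1 \times M_1$. The pair groupoid $M_1 \times M_1$ is proper, hence topologically amenable, $\bbR^q$ is an amenable Lie group, and topological amenability is stable under forming the product of a topologically amenable groupoid with an amenable group; this is exactly the observation recorded in the paragraph preceding the statement. Therefore $\cG_{M_1}$ is topologically amenable, and \cite[Theorem 4.3]{Come19} applies and yields that $\cG$ is a Fredholm groupoid.

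I expect the only genuine obstacle here to be bookkeeping rather than mathematics: one must match the present setting to the precise formulation in \cite{Come19}, in particular confirming that it is topological amenability of the boundary reduction that is invoked there (as opposed to metric amenability alone, or a condition phrased via an exhausting family of regular representations), and that no hidden second-countability or $\sigma$-compactness requirement is being overlooked — all of which are immediate for a boundary groupoid over a compact base.
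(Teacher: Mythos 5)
Your argument is correct and is essentially the paper's own: the paper likewise notes that $\bbR^q$ is amenable, hence the boundary reduction $\bbR^q\times M_1\times M_1$ is topologically amenable, and then invokes \cite[Theorem 4.3]{Come19}, with the dense-open-orbit/pair-groupoid hypothesis built into the definition of a boundary groupoid. The extra bookkeeping you supply (density of $M_0$, compactness of $M$, matching Côme's hypotheses) is exactly what the paper leaves implicit.
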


Given a boundary groupoid, one naturally considers the sequence of invariant submanifolds
$$ M \supset \bar M _1 \supset \cdots \supset \bar M _r, $$
where $\bar M _i$ is given to be (ii) of Definition \ref{Def}.
We have the short exact sequence
\begin{align}
& 0 \to C^* (\cG _{\bar M _i \setminus \bar M _j} ) \to C^* (\bar \cG _i) \to C^* (\bar \cG _j) \to 0, \quad \Forall j > i,
\end{align}
which induces the $K$-theory six-terms exact sequences:
\begin{equation}\label{Comp2}
\begin{CD}
K _1 (C^* (\cG _{M _i} )) @>>> K _1 (C^* (\bar \cG _i)) @>>> K _1(C^* (\bar \cG _{i+1})) \\
@AAA @. @VVV \\
K _0(C^* (\bar \cG _{i+1})) @<<< K _0 (C^* (\bar \cG _i)) @<<< K _0 (C^* (\cG _{M _i }))
\end{CD}
\end{equation}
If $r = 1$, there is only one exact sequence in \eqref{Comp2}. 
Moreover if $G _1$ is solvable, connected and simply connected (i.e. exponential),
then Connes' Thom isomorphism readily gives the $K$-groups of (the $C^*$-algebra of) $\bar \cG _1 = G \times M_1 \times M_1$.
Using these facts, Carrillo-Rouse and the second author computed the $K$-theory of boundary groupoids 
with two leaves and exponential isotropy, namely:
\begin{lem}
Suppose that a boundary groupoid $\cG$ is of the form $\cG = (M _0 \times M _0) \cup (G \times M _1 \times M _1)$ with $G$ an exponential Lie group
(i.e., $\cG$ is a boundary groupoid with two leaves and exponential isotropy).
One has
\begin{align*}
K _0 (C^*(\cG)) \cong \bbZ, & \quad K _1 (C^*(\cG)) \cong \bbZ, & \text{if $\dim G \geq 3$ odd}; \\
K _0 (C^*(\cG)) \cong \{0\}, & \quad K _1 (C^*(\cG)) \cong \bbZ \oplus \bbZ, & \text{if $\dim G$ even.}
\end{align*}
\end{lem}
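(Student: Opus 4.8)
The strategy is to run the two‑step composition series of Lemma~\ref{Comp} and then pass to its associated six‑term exact sequence. A boundary groupoid with two leaves has $r = 1$ and $M = \bar M_0 \supset \bar M_1 = M_1$ in the notation of Definition~\ref{Def}, so Lemma~\ref{Comp} supplies a single short exact sequence of $C^*$-algebras
\begin{equation*}
0 \longrightarrow C^* (M_0 \times M_0) \longrightarrow C^* (\cG) \longrightarrow C^* (G \times M_1 \times M_1) \longrightarrow 0 .
\end{equation*}
The first step is to identify the outer terms. The pair groupoid of the manifold $M_0$ gives $C^* (M_0 \times M_0) \cong \cK$, so its $K$-theory pair $(K_0, K_1)$ is $(\bbZ, 0)$; and since $M_1$ is compact and $G \times M_1 \times M_1$ is a product groupoid, $C^* (G \times M_1 \times M_1) \cong C^* (G) \otimes \cK$, so by Morita invariance its $K$-theory agrees with that of $C^* (G)$.

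Next one computes $K_\bullet (C^* (G))$. As $G$ is exponential it is connected, simply connected and solvable, hence an iterated semidirect product of $\dim G$ copies of $\bbR$, and $C^* (G)$ is built from $\bbC$ by $\dim G$ successive crossed products by $\bbR$; Connes' Thom isomorphism then gives $K_\bullet (C^* (G)) \cong K_{\bullet - \dim G} (\bbC)$. Thus the quotient has $(K_0, K_1) = (0, \bbZ)$ when $\dim G$ is odd and $(K_0, K_1) = (\bbZ, 0)$ when $\dim G$ is even; only the composition series and the Thom isomorphism enter here, so this also covers the non-Hausdorff quasi-graphoid situation, as noted in \cite{So;Ktheory}. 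Feeding these into the six-term exact sequence~\eqref{Comp2} attached to the displayed extension: in the even-dimensional case every connecting map has a zero group at one of its ends, so the sequence reduces to $0 \to \bbZ \to K_0 (C^* (\cG)) \to \bbZ \to 0$ together with $K_1 (C^* (\cG)) = 0$, and since every extension of $\bbZ$ by $\bbZ$ splits this settles the even case. In the odd-dimensional case the sequence reduces to
\begin{equation*}
0 \longrightarrow K_1 (C^* (\cG)) \longrightarrow \bbZ \xrightarrow{\,\partial\,} \bbZ \longrightarrow K_0 (C^* (\cG)) \longrightarrow 0 ,
\end{equation*}
so the asserted answer $K_0 (C^* (\cG)) \cong K_1 (C^* (\cG)) \cong \bbZ$ is equivalent to the vanishing of the connecting homomorphism $\partial$.

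The main obstacle is exactly to prove $\partial = 0$ when $\dim G \geq 3$ is odd (this is the technical heart of \cite{So;Ktheory}). Since $\partial$ is the index map of the extension, I would first replace $\cG$ by its reduction $\cG_W$ to a tubular neighborhood $W$ of $M_1$, identified with the total space of the normal bundle of $M_1$ in $M$: the extension homomorphism $\varepsilon_{\cG, W}$ of Notation~\ref{notations} is an isomorphism on the ideals and is compatible with the quotient maps, hence transports the connecting map of $\cG_W$ to that of $\cG$. On the model $\cG_W$ one then either deforms the extension using the dilations of the normal fibres, or evaluates $\partial$ directly on an explicit generator of $K_1 (C^* (G)) \cong \bbZ$ obtained by propagating the Bott generator through the Thom isomorphisms of the solvable series of $G$. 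The hypothesis $\dim G \geq 3$ has to be used in an essential way: for $\dim G = 1$ the groupoid is of $b$-calculus / cusp type and the corresponding connecting map need not vanish, which is precisely the origin of the $\eta$-term; so a correct argument must exploit that, in codimension $\geq 3$, the index pairing with the generator of $K_1 (C^* (G))$ is trivial — consistently with the vanishing of the $\eta$-term proved in \cite{QS;RenormInd}.
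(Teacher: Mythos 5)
Your skeleton is precisely the one behind the paper's statement: the paper does not reprove this lemma but quotes it from \cite{So;Ktheory}, observing only that it follows from the composition series of Lemma \ref{Comp} (which yields the extension $0 \to C^*(M_0\times M_0) \to C^*(\cG) \to C^*(G\times M_1\times M_1) \to 0$), the induced six-term sequence \eqref{Comp2}, and Connes' Thom isomorphism applied $\dim G$ times to the exponential group $G$. Your identification of the outer terms (Morita equivalence with $C^*(G)$, iterated crossed products by $\bbR$) and your even-case computation are correct. Note, however, that what you actually derive in the even case is $K_0(C^*(\cG))\cong\bbZ\oplus\bbZ$ and $K_1(C^*(\cG))=0$; this agrees with the introduction and with the short exact sequence used in Section 4 of the paper, but not with the lemma as printed, whose even case has the two groups interchanged (evidently a typo). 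You should say so explicitly rather than declare the printed statement ``settled'' by a computation that gives the transposed answer.

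The genuine gap is the odd case. You correctly reduce it to the vanishing of the connecting map $\partial : K_1(C^*(G\times M_1\times M_1))\cong\bbZ \to K_0(C^*(M_0\times M_0))\cong\bbZ$ when $\dim G\geq 3$ is odd, and you correctly identify this as the only non-formal input; but you then merely list two possible strategies (deforming the extension by dilations, or evaluating $\partial$ on an explicit Bott/Thom generator) without carrying either one out, and without exhibiting where the hypothesis $\dim G\geq 3$ actually enters --- which, as you yourself note via the $\dim G=1$ ($b$-type) example, it must. As written this is a statement of intent, not a proof: the conclusion $K_0\cong K_1\cong\bbZ$ is exactly equivalent to $\partial=0$, so nothing has been established beyond the formal bookkeeping. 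The paper sidesteps this by citing \cite{So;Ktheory}, where that vanishing is the substantive computation; if citing that result is permitted your argument collapses to the paper's (and the tubular-neighborhood reduction, though correct by naturality of the index map, adds nothing), whereas if it is not permitted you must actually compute $\partial$ on a generator of $K_1(C^*(G))\otimes\cK$ and prove the resulting index is zero for odd $\dim G\geq 3$.
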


Next, we recall {\em submanifold groupoids}, which form a sub-class of boundary groupoids in \cite{So;Ktheory}.

\begin{exam}[Submanifold groupoids]\label{SubGPD}
Suppose that $M _1 $ is a closed embedded sub-manifold of $M$ of codimension $q \geq 2$.
Let $f \in C^\infty (M)$ be any smooth function which vanishes on $M_1$ and strictly positive over $M_0=\rM \setminus M_1$
(if $M$ is connected then $M_0 = M \setminus M_1$ is still connected and $f$ must either be strictly positive or strictly negative on $M_0$).
Then there exists a Lie algebroid structure over
$$\rA_f:= T \rM \to \rM$$
with anchor map
$$(z,w)\mapsto (z,f(z)\cdot w) , \quad \Forall (z,w)\in T \rM,$$
and Lie bracket on sections
$$ [X, Y] _{A _f} := f [X, Y] + (X \cdot f) Y - (Y \cdot f ) X .$$
This Lie algebroid is almost injective
(since the anchor is injective, in fact an isomorphism, over the open dense subset $\rM _0 = M\setminus \rM_1)$,
and hence it integrates to a Lie groupoid that is a quasi-graphoid (see \cite[Theorems 2 and 3]{Debord;IntAlgebroid}).

By \cite{Nistor;IntAlg'oid}, $\rA _f$ integrates to a Lie groupoid of the form
$$ \cH = \rM _0 \times \rM _0 \sqcup \bbR ^q \times \rM _1 \times \rM _1 \rightrightarrows M=M_0 \sqcup M_1, $$
a boundary groupoid with two leaves and exponential isotropy.
This groupoid is called a submanifold groupoid.
\end{exam}

\begin{exam}[Renormalizable boundary groupoids]\label{RenormGPD}
For even more specific examples of submanifold groupoids, in \cite{QS;RenormInd}, we considered
$$f = r ^N $$
for some fixed even integers $N$, where $r := d (M_1 , \cdot)$ is the Riemannian distance function from $M_1$ with respect to some metric.
The resulting groupoid is called a {\em renormalizable boundary groupoid}.
In \cite{QS;RenormInd}, it was shown that the $\eta$-term for renormalizable groupoids vanishes in the index formula
via the method of renormalized trace.
\end{exam}

\subsection{The tangent groupoid and the adiabatic groupoid }
Let us recall that for a closed manifold $M$,
Connes \cite{Connes;Book} constructed the tangent groupoid, which is a Lie groupoid of the form
$$ \cT (\rM \times \rM) := \rM \times \rM \times (0, 1] \sqcup T \rM \times \{ 0 \} \rightrightarrows \rM \times [0, 1] ,$$
where the differentiable structure of $\cT (\rM \times \rM)$ is defined by fixing some Riemannian metric on $\rM$ and then using the following maps
from some open subsets of $T \rM \times [0, 1] $ to $ \cT (\rM \times \rM) $
\begin{eqnarray*}
&&\mathbf x (p, X, \epsilon) := (p, \exp _p (- \epsilon X), \epsilon), \text{ for } \epsilon > 0, \\
&&\mathbf x (p, X, 0)  := (p, X, 0),
\end{eqnarray*}
as charts.

The tangent groupoid construction generalizes to any Lie groupoid $\cG$.
One considers the adiabatic groupoid
$$ \cT (\cG) := \cG \times (0, 1] \sqcup \rA \times \{ 0 \} \rightrightarrows \rM \times [0, 1],$$
where $\rA$ is the Lie algebroid of $\cG$.
Debord and Skandalis \cite{Debord;Blup} further generalized the construction
by replacing the set of units $\rM \subset \cG$ with arbitrary sub-groupoid $\cH \subset \cG$,
and considered gluing $\mathcal N ^\cG _\cH $, the normal bundle of $\cH$ in $\cG$, to $\cG \times \bbR \setminus \{ 0 \} $.
The resulting object is naturally a Lie groupoid, which they call the deformation to the normal cone.

For simplicity we return to the case of the adiabatic groupoid $\cT (\cG)$. 
One naturally constructs the index map
\begin{equation}
\label{AnalyticInd}
\ind _{\cT (\cG)} := r_{\cT (\cG), \rM \times \{1 \}} \circ r_{\cT (\cG), \rM \times \{0 \}} ^{-1},
\end{equation}
where $r_{\cT (\cG), \rM \times \{1 \}} : C^* (\cT (\cG)) \to C^* (\cG) $ and
$ r_{\cT (\cG), \rM \times \{0 \}} : C^* (\cT (\cG)) \to C^* (\rA) $ are respectively the restriction maps to the sub-groupoids of $\cT (\cG))$ over
$ \rM \times \{ 1 \}$ and $\rM \times \{ 0 \}$.

Let $\varPsi$ be any classical elliptic pseudo-differential operator on $\cG \tto M$.
Its principal symbol $\sigma (\varPsi)$ is an invertible element in $C (S \rA')$,
where $A'$ is the dual bundle of $A$ and $SA'$ denotes the sphere bundle of $A'$.
We identify $C (A')$, the algebra of continuous functions on $A'$ vanishing at infinity with $C^* (A)$, 
the (fiber-wise) convolution $C^*$-algebra, through Fourier transform to obtain
$$  K_0 (C (A')) \cong K_0 (C^* (A)),$$
and let
$$\partial : K _1 (S \rA') \to K_0 (C (\rA')) \cong K_0 (C^* (\rA)) $$
be the connecting map induced by the short exact sequence
$$ 0 \to C (\rA') \to C (\bar \rA') \to (S \rA') \to 0 $$
(where $\bar \rA'$ denotes the bundle of solid balls in $\rA'$).
Then by \cite{So;Ktheory}, the analytic index of $\varPsi $ is given by
$$ \ind _{\cT (\cG)} (\partial [\sigma (\varPsi)]) \in C^* (\cG),$$
which, if $\varPsi $ is of order 0, coincides with the image of $[\varPsi] \in K_1 (\fU (\cG))$ under the connecting map induced by the short exact sequence
$$0 \to C^* (\cG) \to \fU (\cG) \to \fU / C^* (\cG) \to 0.$$

\medskip
\section{The deformation groupoid and the deformation index map}\label{deformation gpd}

Motivated by the tangent groupoid construction in the previous section, we introduce the following definition, which plays a central role in the paper.

\begin{dfn}\label{deformation}
Let $\cG \rightrightarrows \rM$ be a Lie groupoid, not necessarily a boundary groupoid.
{\em A deformation from the pair groupoid $M \times M$ to $\cG$} is a Lie groupoid $\cD \rightrightarrows \rM \times [0, 1]$
that is isomorphic as an abstract groupoid to
$$ \rM \times \rM \times (0, 1] \sqcup \cG \rightrightarrows \rM \times (0, 1] \sqcup \rM $$
(where $\rM \times \rM \times (0, 1] \rightrightarrows \rM \times (0, 1]$ is the product of the pair groupoid and the set $(0, 1]$).
\end{dfn}

\begin{example}
Let $M$ be a closed manifold. Regard $TM\rightarrow M$ as a  Lie groupoid, then Connes' tangent groupoid,
$\cT (\rM \times \rM)$ gives a deformation from the pair groupoid to $TM$.
\end{example}



\subsection{Construction of deformations from the pair groupoid}
The following fact is another motivation to introduce the above definition.

\begin{proposition}\label{Existence}
There exists a deformation from the pair groupoid for submanifold groupoids defined in Example \ref{SubGPD}.
\end{proposition}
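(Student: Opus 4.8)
The strategy is to build the deformation $\cD$ explicitly by the same mechanism that produces Connes' tangent groupoid, but using the ``scaled'' Lie algebroid $\rA_f$ of the submanifold groupoid in place of $TM$. Recall from Example \ref{SubGPD} that the submanifold groupoid $\cH$ integrates $\rA_f = TM$ with anchor $(z,w)\mapsto(z,f(z)w)$. The key observation is that one may carry out the deformation-to-the-normal-cone / adiabatic construction of Debord--Skandalis \cite{Debord;Blup} in a way that depends on a parameter, so that at $t=0$ one lands not on the honest Lie algebroid but on $\cH$ itself. Concretely, I would consider the family of Lie algebroids $\rA_{f,\epsilon}$ over $M$ with anchor $(z,w)\mapsto (z,\,(\epsilon + (1-\epsilon) f(z))\, w)$ interpolating between $TM$ (at $\epsilon=1$) and $\rA_f$ (at $\epsilon=0$), with the corresponding $f$-twisted bracket; each integrates (by \cite{Nistor;IntAlg'oid} / Debord's quasi-graphoid construction, since the anchor is an isomorphism over a dense open set) to a groupoid $\cH_\epsilon$ which for $\epsilon>0$ is isomorphic to the pair groupoid $M\times M$ (the anchor being a global isomorphism) and for $\epsilon=0$ is $\cH$.

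First I would assemble the disjoint union $\cD := \bigsqcup_{\epsilon\in(0,1]} M\times M \ \sqcup\ \cH$ as an abstract groupoid over $M\times[0,1]$, which by construction matches Definition \ref{deformation} verbatim. The real content is to equip $\cD$ with a smooth manifold structure making source, target, multiplication and inversion smooth. For this I would use the classical trick: fix a Riemannian metric, use a tubular neighborhood of $M_1$, and write down explicit charts modeled on $\rA_f \times [0,1]$ mapping $(p,X,\epsilon)$ to the point of $M\times M$ obtained by following the (rescaled) exponential flow of the anchor of $\rA_{f,\epsilon}$ — i.e. a map of the shape $(p,X,\epsilon)\mapsto (p,\ \exp^{\rA_{f,\epsilon}}_p(-X),\ \epsilon)$ for $\epsilon>0$ and $(p,X,0)\mapsto$ the corresponding element of $\cH$ at $\epsilon=0$. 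One then checks these charts are mutually compatible and that the structure maps are smooth in them. Alternatively — and this is probably cleaner — I would invoke the deformation-to-the-normal-cone functoriality: the groupoid $\rA_f \times [0,1] \Rightarrow M\times[0,1]$ (bundle of Lie algebras, trivially a groupoid) has a deformation to its normal cone along the zero section inside the appropriate ambient groupoid, and one identifies $\cD$ with (a piece of) this, using that the exponential map of $\rA_f$ near the zero section provides the needed local diffeomorphism. Either way, smoothness away from $\epsilon=0$ is immediate since there $\cD$ is just $M\times M\times(0,1]$, and the only thing to verify is smooth gluing across $\epsilon=0$.

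The main obstacle is precisely this gluing across $t=\epsilon=0$: one must show that the abstract identification of the $\epsilon\to 0$ limit of the pair groupoids with $\cH$ is realized by a smooth chart, which requires understanding how $\cH$ sits near the singular leaf $M_1$. Near a point of $M_1$, using normal coordinates $(y,x)\in M_1\times\bbR^q$ with $f$ comparable to $|x|$ (or to $|x|^N$ in the renormalizable case) and the fact that $\rA_f$-exponentials in the normal directions blow up the $x$-coordinate logarithmically, one sees that the $\bbR^q$-factor of $\cH_{M_1} = \bbR^q\times M_1\times M_1$ emerges exactly as the limiting ``rescaled normal'' variable — this is the content of Debord's computation that $\rA_f$ integrates to a groupoid of the stated form, and I would reuse it here with the extra parameter $\epsilon$ inserted. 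A secondary, more bookkeeping-level difficulty is checking Hausdorffness (or, following the Remark after the definition of $A(\cG)$, being content with local Hausdorffness, which suffices for the $K$-theory arguments later). I expect the verification that the source map is a submersion and that the local trivializations of the $\bs$-fibers extend smoothly over $\epsilon=0$ to be the step demanding genuine care; everything else is either the standard tangent-groupoid argument or a direct transcription of the results of \cite{Debord;IntAlgebroid, Nistor;IntAlg'oid} already cited in Example \ref{SubGPD}.
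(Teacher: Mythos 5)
Your underlying idea---interpolating the anchor of $\rA_f$ between $f$ and a strictly positive function---is the same germ as the paper's argument, but your execution leaves the only genuinely substantive point unproved. The entire content of the proposition is the existence of a smooth Lie groupoid structure on $\cD$ across $\epsilon=0$; you acknowledge this (``the step demanding genuine care'') and then defer it. The two routes you sketch do not close it: the hand-built exponential charts are never shown to be compatible across $\epsilon=0$ (this is precisely where the $\bbR^q$-factor of $\cH_{M_1}$ has to emerge smoothly, and asserting that one can ``reuse Debord's computation with the extra parameter inserted'' is a statement of intent, not a verification), while the appeal to deformation-to-the-normal-cone functoriality does not apply as stated, since the DNC of \cite{Debord;Blup} deforms onto a normal bundle (a vector bundle over the subobject), not onto the boundary groupoid $\cH$, so ``identifying $\cD$ with a piece of this'' is not available without a separate argument. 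As written, the proposal is a plan for the crucial gluing rather than a proof of it.

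The fix---and this is what the paper does---is to package your $\epsilon$-family of algebroids into a \emph{single} Lie algebroid over $\rM\times[0,1]$ and integrate once, instead of integrating slice by slice and then gluing groupoids by hand. Let $\tilde\rA_f$ be the pullback of $\rA_f\cong T\rM$ to $\rM\times[0,1]$, with anchor $((z,s),w)\mapsto((z,s),(f(z)+s)\,w)$ (your interpolation $\epsilon+(1-\epsilon)f$ would do equally well) and bracket $[X,Y]_{\tilde\rA_f}=(f+s)[X,Y]+(X\cdot f)Y-(Y\cdot f)X$. This algebroid is almost injective, the anchor being an isomorphism off the closed subset $\rM_1\times\{0\}$, so Debord's integrability theorem \cite{Debord;IntAlgebroid} produces a Lie groupoid over $\rM\times[0,1]$ integrating it, and \cite{Nistor;IntAlg'oid} (as already used in Example \ref{SubGPD}) identifies this groupoid as $\cH\sqcup(\rM\times\rM\times(0,1])\tto\rM\times[0,1]$, which is exactly a deformation in the sense of Definition \ref{deformation}. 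The smooth structure across $s=0$, the submersivity of the source map, and the (local) Hausdorffness questions you raise are all delivered by the integration theorems, so no explicit chart construction is required.
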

\begin{proof}
We use the same notation as in Example \ref{SubGPD}.
To construct the groupoid $\cD \rightrightarrows \rM \times [0, 1] $,
we begin with defining the Lie algebroid of $\cD$.
We set $\tilde A _f \to \rM \times [0, 1]$ to be the vector bundle pullback of $\rA _f $ ($\cong T M$).
Sections of $\tilde \rA _f$ are just vector fields on $\rM \times [0, 1]$ which are tangential to the $\rM$ direction.
Hence one can define the anchor map
$$\nu((z, s),w) = ((z, s),(f(z) + s)\cdot w) , \quad \Forall (z,w)\in T \rM,  s\in[0,1], $$
and Lie bracket on sections by
$$ [X, Y] _{\tilde A _f} := (f + s) [X, Y] + (X \cdot f) Y - (Y \cdot f ) X .$$
When restricted to $M \times \{ 0 \}$,
the Lie algebroid $\tilde \rA _f \big \vert_{M \times \{ 0 \}}$ is isomorphic to $\rA _f $. 
On $\rM \times \{ s \}$ for each $s > 0$, since $f+s$ is positive and bounded away from $0$, $\nu$ defines an isomorphism
$$ \tilde \rA _f \big \vert_{M \times \{ s \}} \cong T M.$$
Using \cite{Debord;IntAlgebroid} and \cite[Theorem 1]{Nistor;IntAlg'oid}, 
one observes that $\tilde \rA _f $ integrates to a Lie groupoid of the form
$$ \cD = \cH \sqcup (\rM \times \rM \times (0, 1]) \rightrightarrows \rM \times [0, 1],$$
which is what we need.
\end{proof}

\subsection{Obstruction to the existence of deformations from the pair groupoid}
In this subsection we point out some notable necessary condition for $\cG$ in order for a deformation from the pair groupoid to exist.

\begin{proposition}\label{LieAlgIso}
Suppose that $\cG \tto \rM$ is a Lie groupoid such that a deformation from the pair groupoid exists.
Denote by $\rA$ the Lie algebroid of $\cG$. Then we have 
$$ \rA \cong T \rM,$$
as vector bundles.
\end{proposition}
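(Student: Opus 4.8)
The plan is to extract a vector bundle isomorphism $A \cong TM$ from the geometry of the deformation groupoid $\cD \rightrightarrows M \times [0,1]$ by restricting its Lie algebroid to suitable slices. First I would take the Lie algebroid $A(\cD) \to M \times [0,1]$ of the deformation $\cD$. Since $\cD$ is, as an abstract groupoid, $M \times M \times (0,1] \sqcup \cG$, the restriction of $\cD$ to the invariant submanifold $M \times \{s\}$ for $s > 0$ is the pair groupoid $M \times M$, whereas its restriction to $M \times \{0\}$ is $\cG$. Correspondingly, $A(\cD)|_{M \times \{s\}} \cong A(M \times M) = TM$ for every $s \in (0,1]$, while $A(\cD)|_{M \times \{0\}} \cong A$. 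The key point is that $A(\cD)$ is an honest smooth vector bundle over the connected base $M \times [0,1]$, so all its restrictions to the slices $M \times \{s\}$, $s \in [0,1]$, are isomorphic as vector bundles over $M$; in particular the $s = 0$ slice and any $s > 0$ slice give $A \cong TM$.

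To carry this out carefully I would argue as follows. Since $\cD$ is a Lie groupoid over $M \times [0,1]$, its algebroid $A(\cD) = T^{\mathbf s}(\cD)|_{M\times[0,1]}$ is a vector bundle over $M \times [0,1]$ of rank equal to $\dim M$ (the $\mathbf s$-fibers of $\cD$ all have dimension $\dim M$: for $s>0$ the fiber is a copy of $M$, and for $s = 0$ it is an $\mathbf s$-fiber of $\cG$, which is a $\dim M$-dimensional manifold since $\cG$ is a boundary-type groupoid with pair-groupoid open leaf). The inclusion $M \times \{0\} \hookrightarrow M \times [0,1]$ pulls $A(\cD)$ back to $A(\cD)|_{M \times \{0\}}$; because $\cD_{M \times \{0\}} = \cG$ and the differentiable structure of $\cD$ restricts to that of $\cG$, this pullback is canonically $A = A(\cG)$. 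Likewise the inclusion $M \times \{1\} \hookrightarrow M \times [0,1]$ gives $A(\cD)|_{M \times \{1\}} \cong A(M \times M) = TM$. Now $[0,1]$ is contractible, so the two inclusions $M \times \{0\} \hookrightarrow M \times [0,1] \hookleftarrow M \times \{1\}$ are homotopic, and homotopic maps pull back a vector bundle to isomorphic bundles. Hence $A \cong TM$ as vector bundles over $M$.

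\textbf{Main obstacle.} The routine part is the homotopy-invariance argument; the step that needs care is checking that the identification $A(\cD)|_{M \times \{0\}} \cong A(\cG)$ is legitimate, i.e. that the smooth structure on $\cD$ near the fiber $\{0\}$ genuinely restricts to the smooth structure of $\cG$ as a Lie groupoid, so that $\mathbf s$-vertical tangent vectors of $\cD$ along $M \times \{0\}$ really are $\mathbf s$-vertical tangent vectors of $\cG$. This is built into Definition \ref{deformation} only at the level of abstract groupoids, so one must invoke that $\cG = \cD_{M\times\{0\}}$ is an embedded (closed) invariant subgroupoid of the Lie groupoid $\cD$, whence its algebroid is the restriction of $A(\cD)$. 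I expect no genuine difficulty here, but it is the one point where the "abstract isomorphism" in the definition has to be promoted to a smooth statement, and it should be stated explicitly rather than glossed over.
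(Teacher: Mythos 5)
Your argument is correct and is essentially the paper's own proof: both restrict the Lie algebroid $\tilde{\rA}$ of $\cD$ to $\rM\times\{0\}$ and $\rM\times\{1\}$ to identify it with $\rA$ and $T\rM$ respectively, and then transfer along the $[0,1]$-direction. The only cosmetic difference is that where you invoke homotopy invariance of pullback bundles over the contractible interval, the paper produces the isomorphism concretely by fixing a connection on $\tilde{\rA}$ and parallel transporting along the curves $t\mapsto(p,t)$ --- the same mechanism --- and your remark that the Lie (smooth) structure of $\cD\vert_{\rM\times\{0\}}$ must be that of $\cG$ is a fair reading of Definition \ref{deformation}, which the paper uses implicitly.
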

\begin{proof}
Let $\cD \tto \rM \times [0, 1]$ be a deformation from the pair groupoid. Denote its Lie algebroid by $\tilde \rA \to \rM \times [0, 1]$.
Then one has
$$ \tilde \rA \big \vert _{\rM \times \{ 0 \}} \cong \rA , \quad \tilde \rA \big \vert _{\rM \times \{ 1 \}} \cong T \rM .$$
Hence an isomorphism can be constructed by, say, 
fixing a connection $\tilde \nabla$ on $\tilde \rA$ and then parallel transport along the family of curves $(p, t), t \in [0, 1]$ for each $p \in \rM$.
\end{proof}

The existence of a deformation from the pair groupoid also imposes necessary conditions 
on the structural vector fields of $\cG$ (i.e. the image of the anchor map). 
For simplicity, we consider the case when $\rM_1$ is a single point $\{ p \}$.
Suppose a deformation $\cD$ from the pair groupoid for $\cG$ exists.
Denote the Lie algebroids of $\cG$ and $\cD$ by $\rA$ and $\tilde \rA$ respectively, and their anchor maps by $\nu$ and respectively $\tilde \nu$.
Then it is obvious that
\begin{align*}
\tilde \rA \big \vert _{\rM \times (0, 1]} \cong & \, T \rM \times (0, 1], \\
\tilde \rA \big \vert _{\rM \times \{ 0 \}} \cong & \, \rA .
\end{align*}
Suppose that $s$ is a nowhere vanishing local section of $\rA$ around $p$, such that $\nu (s) (p) = 0$.
Then the section $s$ extends to some nowhere vanishing local section $\tilde s$ of $\tilde \rA$ over an open neighborhood of $\{p \} \times [0, 1]$.
Because $\tilde \nu $ is injective except at $(p, 0)$,
it follows that $\tilde \nu (\tilde s)$ is a family of local vector fields on $\rM$ parameterized by $[0, 1]$,
which is non-vanishing on $\rM \times \{ t\}$, $1 \geq t > 0$ and has an isolated zero at $p$ on $\rM \times \{ 0 \}$.
This implies that the Hopf-Poincar\'e index of $\nu (s) = \tilde \nu (\tilde s) \big \vert _{\rM \times \{ 0 \}}$ equals zero.
Hence, we are able to construct the following counterexample which shows that a boundary groupoid
may not possess a deformation from the pair groupoid.


\begin{exam}
\label{Counter}
Let $\bbS ^2 = \{ (x, y, z) \in \bbR^3 : x^2 + y^2 +z^2 \} $ be the 2-sphere.
As a set, we identify 
$$ \bbS ^2 = \{ (0, 0, -1) \} \sqcup \bbR^2 $$ 
by stereographic projection.
The vector fields
$$ X := \tfrac{\partial }{\partial u} , Y := \tfrac{\partial}{\partial v} $$
on $\bbR ^2$ extends smoothly to $\bbS ^2$ by $0$.
Indeed, using stereographic coordinates on $\bbS ^2 \setminus \{ (0, 0, 1) \}$, one has
$$ X = -((u')^2 - (v')^2) \tfrac{\partial }{\partial u'} 
- 2 u' v' \tfrac{\partial }{\partial v'} $$
and similar expression for $Y$.
These vector fields commute and integrates to an action of $\bbR ^2$ on $\bbS ^2$,
and one obtains the action groupoid $\cG = \bbS ^2 \rtimes \bbR ^2$.
Thus the structural vector fields of the tangent Lie algebroid of $\cG$ is spanned by $\{X, Y \}$,
which contradicts the above necessary condition that the Hopf-Poincar\'e index is equal to zero.

Lastly, note that the action groupoid $\bbS ^2 \rtimes \bbR ^2$ (by right translation) is isomorphic to the pair groupoid $\bbR^2 \times \bbR^2$,
therefore $\cG$ is a boundary groupoid.
\end{exam}

\begin{remark}
Example \ref{Counter} exhibits that there are certain obstructions to the existence of such deformation for boundary groupoids.
At present, besides Proposition \ref{LieAlgIso}, 
we do not know of any other necessary or sufficient conditions that guarantee the existence of such deformations in general, 
which will be left as future work.
\end{remark}

\medskip
\subsection{The deformation index map}\label{IndexMap}

In this subsection, we always assume that a deformation from the pair groupoid
$$ \cD := \cG \sqcup (\rM \times \rM \times (0, 1]) \rightrightarrows \rM \times [0, 1]$$
exists for the given Lie groupoid $\cG \tto M$.

We shall use $\cD$ to construct an index, similar to \cite{Connes;Book, Zenobi;SecondaryInv}.
Observe that $\cD$ has closed saturated subgroupoids $\cD \vert_{\rM \times \{ 0 \}} \cong \cG , \cD \vert _{\rM \times \{1\}} \cong \rM \times \rM$. 
Because the sequence
\begin{align*}
K_\bullet (C^*(\cD \vert_{\rM \times ( 0, 1] }))
& \cong K_\bullet (C^* (\rM \times \rM \times (0, 1])) \cong 0
\xrightarrow{\varepsilon _{\cD , \rM \times (0, 1]}} K _\bullet (C^* (\cD)) \\
& \xrightarrow{r _{\cD , \rM \times \{0\}}} K _\bullet (C^* (\cG))
\xrightarrow{\partial} K_{\bullet + 1} (C^*(\cD \vert_{\rM \times ( 0, 1]})) \cong 0
\end{align*}
is exact, therefore the map $r _{\cD , \rM \times \{0\}}$ is invertible,
where the extension map $\varepsilon _{\cD , \rM \times (0, 1]}$ is defined in Notation \ref{notations},
and the restriction map $r _{\cD , \rM \times \{0\}}$ is defined in Definition \ref{restrictionmap}.

Hence it is natural to introduce the following index map.

\begin{dfn}
The deformation index map is defined to be
$$ \ind _{\cD} := r _{\cD , \rM \times \{1\}} \circ r _{\cD , \rM \times \{0\}} ^{-1} : K_\bullet (C^* (\cG)) \to K_\bullet (C^* (\rM \times \rM)).$$
\end{dfn}

Using this deformation index map $\ind_{\cD}$, we establish the following `index comparison' theorem,
which is one of the main results in the paper.

\begin{thm}\label{Main1Pf}
Let $\cG\tto M$ be a Lie groupoid. Suppose there exists a deformation groupoid $\cD$ from the pair groupoid $M\times M$.
Then one has the commutative diagram
\begin{equation}
\xymatrix@C=7em{
K_0 (C^* (\rA))\ar[r]^{\ind _{\cT (\cG)}} \ar[d]^{\cong} & K_0 (C^* (\cG)) \ar[d]^{\ind _\cD} \\
K_0 (C^* (T \rM ))\ar[r]^{\ind _{\cT (\rM \times \rM)}} & K_0 (C^* (\rM \times \rM)),
}
\end{equation}
where the top map is just the analytic index map constructed via the adiabatic groupoid and the bottom map is the Atiyah-Singer index.
\end{thm}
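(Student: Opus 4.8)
The plan is to build a single large groupoid over $\rM \times [0,1]_t \times [0,1]_s$ (the variable $t$ being the adiabatic/tangent parameter, the variable $s$ being the deformation parameter from the pair groupoid) that interpolates simultaneously between the four corners appearing in the diagram, and then read off the commutativity from the functoriality of restriction maps under a careful "corner chase." Concretely, I would first construct the \emph{adiabatic deformation} $\cT(\cD)$ of the given deformation groupoid $\cD \tto \rM \times [0,1]$, i.e. its own adiabatic groupoid over $(\rM \times [0,1]) \times [0,1]$. Its Lie algebroid is the pullback of $\tilde\rA$ (the algebroid of $\cD$) rescaled in the usual adiabatic way by the new parameter $t$; since $\tilde\rA|_{\rM \times \{0\}} \cong \rA$ and $\tilde\rA|_{\rM \times \{1\}} \cong T\rM$ by Proposition \ref{LieAlgIso} (and its proof), the four faces of $\cT(\cD)$ over the square are, respectively: $\cT(\cG)$ on $s=0$, $\cT(\rM \times \rM)$ on $s=1$, $\cD$ on $t=1$, and $\rA \times (0,1]_s \sqcup T\rM$ (a deformation from $T\rM$ to $\rA$, i.e. Connes--Skandalis' deformation to the normal cone associated with the inclusion of the zero section of $\rA$ inside... — more precisely the pullback adiabatic algebroid restricted to $t=0$) on the $t=0$ face.

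Next I would invoke the key exactness input: for each of the four edges of the square, the relevant relative $C^*$-algebra is $C^*$ of a groupoid of the form $(\text{something}) \times (0,1]$, whose $K$-theory vanishes (this is exactly the argument already used in Subsection \ref{IndexMap} to show $r_{\cD,\rM\times\{0\}}$ is invertible, and the same argument used in \cite{So;Ktheory} for $\ind_{\cT(\cG)}$ and $\ind_{\cT(\rM\times\rM)}$). Consequently every restriction map "toward the interesting face" used in defining the four index maps is an isomorphism on $K_0$, and each of the four index maps in the diagram is literally a composite $r_{\text{face}_1} \circ r_{\text{face}_2}^{-1}$ of restriction maps associated with $\cT(\cD)$ (after identifying the four faces as above). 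Then I would observe that the diagram to be proved is obtained by gluing together two commuting squares of restriction maps along the $t=0$, $s$-varying edge: restriction maps to disjoint faces of a groupoid commute up to the obvious identifications because restriction is a functor and the faces are transverse. Formally: let $\mathrm{R}_0, \mathrm{R}_1$ denote restriction to $t=0, t=1$ and $\mathrm{S}_0,\mathrm{S}_1$ restriction to $s=0,s=1$; then $\mathrm{R}_i \circ \mathrm{S}_j = \mathrm{S}_j \circ \mathrm{R}_i$ as maps out of $K_0(C^*(\cT(\cD)))$ because both equal restriction to the corresponding corner. Inverting the appropriate maps (legitimate by the vanishing above) and substituting $\ind_{\cT(\cG)} = \mathrm{R}_1 \mathrm{R}_0^{-1}|_{s=0}$, $\ind_{\cD} = \mathrm{S}_1 \mathrm{S}_0^{-1}|_{t=1}$, etc., yields the claimed commutativity; the leftmost vertical isomorphism $K_0(C^*(\rA)) \cong K_0(C^*(T\rM))$ is exactly $\mathrm{S}_1\mathrm{S}_0^{-1}$ restricted to the $t=0$ face, which is the same isomorphism appearing implicitly in Proposition \ref{LieAlgIso}.

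The main obstacle, and the step I would spend most care on, is \emph{constructing $\cT(\cD)$ as a genuine Lie groupoid with the correct four faces} — i.e. checking that the naive adiabatic rescaling of $\tilde\rA$ integrates, and that the integration restricts correctly on all four faces, in particular that the $t=0$ face really is the deformation-to-the-normal-cone type groupoid and not something degenerate, and that the whole object is Hausdorff (or at least that the $K$-theoretic machinery survives the failure of Hausdorffness, as flagged in the Remark after Example \ref{SubGPD}). For $\cG$ a submanifold groupoid this can be done concretely by pulling back the explicit algebroid $\tilde\rA_f$ of Proposition \ref{Existence} once more and rescaling, using \cite{Debord;IntAlgebroid} and \cite[Theorem 1]{Nistor;IntAlg'oid} a second time; for a general $\cG$ admitting a deformation $\cD$, one applies Debord--Skandalis' deformation-to-the-normal-cone functor \cite{Debord;Blup} to the inclusion $\rM \times [0,1] \hookrightarrow \cD$, which produces $\cT(\cD)$ functorially, and one identifies its faces by unwinding that construction. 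A secondary technical point is to make sure all the restriction maps are well defined at the level of $C^*$-algebras (not merely the $\Psi^{-\infty}$ ideals), which is standard for closed saturated subgroupoids but should be stated; and to confirm that the bottom horizontal map so produced is indeed \emph{the} Atiyah--Singer index map $\ind_{\cT(\rM\times\rM)}$ and not merely some index map — this is immediate since $\cT(\rM\times\rM)$ is Connes' tangent groupoid and the composite is its standard index, as recalled in Subsection \ref{IndexMap}.
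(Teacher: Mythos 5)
Your proposal is correct and follows essentially the same route as the paper's proof: both pass to the adiabatic groupoid $\cT(\cD)$ of the deformation $\cD$ over $\rM\times[0,1]\times[0,1]$, identify its faces with $\cT(\cG)$, $\cT(\rM\times\rM)$, $\cD$ and $\tilde\rA$, invert the restriction maps toward the zero-faces using contractibility of the $(0,1]$-cones, and obtain commutativity by a corner chase, with the left vertical isomorphism identified via $\tilde\rA\cong\rA\times[0,1]$ exactly as in Proposition \ref{LieAlgIso}. The only inessential difference is your extra concern about integrating the rescaled algebroid: the paper simply takes the adiabatic groupoid of the Lie groupoid $\cD$, which always exists.
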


\begin{proof}
Let $\cT (\cD) \rightrightarrows (\rM \times [0, 1]) \times [0, 1]$ be the adiabatic groupoid of $\cD$.
Recall that as a set $\cT (\cD) = \cD \times (0, 1] \sqcup \tilde A $.
Hence $\cT (\cD)$ restricted to $(\rM \times \{ 0 \}) \times [0, 1]$ and $(\rM \times \{ 1 \}) \times [0, 1]$ are respectively
\begin{align*}
\cT (\cG) \rightrightarrows \rM \times [0, 1] =& \cG \times (0, 1] \sqcup A, \\
\text{ and } \cT (\rM \times \rM) \rightrightarrows \rM \times [0, 1] =& \rM \times \rM \times (0, 1] \sqcup T \rM,
\end{align*}
the adiabatic groupoid of $\cG$ (respectively the adiabatic groupoid of $\rM \times \rM$).
One can further restrict $\cT (\cG)$ and $\cT (\rM \times \rM)$ to $\rM \times \{ 0 \}$ or $\rM \times \{ 1 \}$, resulting in the commutative diagram
\begin{equation}
\xymatrix@C=7em{
C^* (\rA) & C^* (\cT (\cG)) \ar[l]_{r_{\cT (\cG), \rM \times \{0 \}}} \ar[r]^{r_{\cT (\cG), \rM \times \{1 \}}} & C^* (\cG) \\
C^* (\tilde \rA ) \ar[u]_{r_{\tilde \rA , \rM \times \{0\}}} \ar[d]^{r_{\tilde \rA , \rM \times \{1\}}}
& C^*(\cT (\cD)) \ar[l] _{r_{\cT (\cD), (\rM \times [0,1]) \times \{0\}}}  \ar[r]^{r_{\cT (\cD), (\rM \times [0,1]) \times \{1\}}}
\ar[u]_{r_{\cT (\cD), (\rM \times \{0\}) \times [0,1]}} \ar[d]^{r_{\cT (\cD), (\rM \times \{1\}) \times [0,1]}}
& C^* (\cD) \ar[u]_{r_{\cD, \rM \times \{0 \}}} \ar[d]^{r_{\cD, \rM \times \{1 \}}} \\
C^* (T\rM) & C^* (\cT (\rM \times \rM)) \ar[l]^{r_{\cT (\rM \times \rM), \rM \times \{0 \}}} \ar[r]_{r_{\cT (\rM \times \rM), \rM \times \{1 \}}}
& C^* (\rM \times \rM).
}
\end{equation}
Then we pass to the corresponding $K$-group maps.
Observe that $r_{\cT (\cD), (\rM \times \{0\}) \times [0, 1]}$ fits in the six-term exact sequence
\begin{align*}
K_\bullet (C^* (\cT (\cD) _{ (\rM \times (0, 1]) \times [0, 1]}))
&\xrightarrow {\varepsilon _{\cT (\cD), (\rM \times (0, 1]) \times [0, 1]}}
K_\bullet (C^* (\cT (\cD))) \\
&\xrightarrow {r_{\cT (\cD), (\rM \times \{0\}) \times [0, 1]}}
K_\bullet (C^* (\cT (\cG))) \to \cdots,
\end{align*}
and moreover we have
$$ \cT (\cD) _{ (\rM \times (0, 1]) \times [0, 1]} \cong \cT (\cD _{\rM \times (0, 1]}) \cong \cT (\rM \times \rM \times (0, 1])
\cong \cT (\rM \times \rM ) \times (0, 1], $$
whose convolution $C^*$-algebra is contractible.
Therefore the map $r_{\cT (\cD), (\rM \times \{0\}) \times [0, 1]}$ is invertible.
Similarly, the maps $r_{\cT (\cG), \rM \times \{0 \}}$, $r_{\tilde \rA , \rM \times \{0\}}$, $r_{\cT (\cD), (\rM \times [0,1]) \times \{0\}}$,
$r_{\cT (\rM \times \rM), \rM \times \{0 \}}$ and  $r_{\cD, \rM \times \{0 \}}$ are all isomorphisms between corresponding $K$-groups.

Recall \eqref{AnalyticInd} that
\begin{align*}
\ind _{\cT (\cG)} :=& r_{\cT (\cG), \rM \times \{1 \}} \circ r_{\cT (\cG), \rM \times \{0 \}} ^{-1}, \\
\ind _{\cT (\rM \times \rM)} :=& r_{\cT (\rM \times \rM), \rM \times \{1 \}} \circ r_{\cT (\rM \times \rM), \rM \times \{0 \}} ^{-1}
\end{align*}
are just Connes' analytic index maps.
Lastly, observe from the proof of Proposition \ref{LieAlgIso} that $\tilde A \cong A \times [0, 1]$.
Hence for any $u \in C^* (A)$ that is furthermore a Schwartz function,
let $\tilde u \in C^* (\tilde A)$ be the pullback of $u$
(i.e., $\tilde u $ is just $u$ on each $A \times \{s\}$).
Then 
$$u \mapsto \tilde u \big \vert _{M \times \{ 1 \}} $$
induces the $K$-theory map
$$ r_{\tilde \rA , \rM \times \{1\}} \circ r_{\tilde \rA , \rM \times \{0\}} ^{-1} $$
(which is equivalent to identifying $A \cong T M$).
Hence one ends up with the commutative diagram
\begin{equation}
\xymatrix@C=7em{
K_0 (C^* (\rA))\ar[r]^{\ind _{\cT (\cG)}} \ar[d]^{\cong} & K_0 (C^* (\cG)) \ar[d]^{\ind _\cD} \\
K_0 (C^* (T \rM ))\ar[r]^{\ind _{\cT (\rM \times \rM)}} & K_0 (C^* (\rM \times \rM)),
}
\end{equation}
which completes the proof.
\end{proof}

We are in position to prove the following theorem for boundary groupoids with two orbits and exponential isotropy,
which implies Theorem \ref{Main2}. 
Recall that the restriction map $r$ and the extension map $\varepsilon$ are defined in
Definition \ref{restrictionmap} and Notation \ref{notations}, respectively.

\begin{thm}\label{Main2Pf}
Let $\cH= M_0 \times M_0 \sqcup G \times M_1 \times M_1 \rightrightarrows M = M_0\sqcup M_1$ be a boundary groupoid
with two orbits and exponential isotropy.
Suppose further that there exists a deformation $\cD$ from the pair groupoid $M \times M$ for $\cH$.
Then one has
$$ \ind _\cD \circ \varepsilon _{\cH , M_0} = \varepsilon _{M \times M, M_0}.$$
\end{thm}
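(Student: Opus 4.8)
The plan is to lift both extension maps through the deformation by means of a single explicit $*$-homomorphism, and then chase the induced diagram on $K_0$. Since $M_0$ is open and invariant in $M$, the set $M_0\times[0,1]$ is open and invariant in the unit space $M\times[0,1]$ of $\cD$, so $\cD_{M_0\times[0,1]}:=\bs^{-1}(M_0\times[0,1])\cap\bt^{-1}(M_0\times[0,1])$ is an open subgroupoid of $\cD$, and Notation \ref{notations} furnishes an extension $*$-homomorphism $\varepsilon_{\cD,M_0\times[0,1]}\colon C^*(\cD_{M_0\times[0,1]})\to C^*(\cD)$. By Definition \ref{deformation} together with the shape of $\cH$, the underlying abstract groupoid of $\cD_{M_0\times[0,1]}$ is $(M_0\times M_0)\times(0,1]\sqcup(M_0\times M_0)=(M_0\times M_0)\times[0,1]$, fibered over $[0,1]$ with every fiber the pair groupoid $M_0\times M_0$; in particular $\bs$, $\bt$ and the multiplication all preserve the deformation parameter. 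Choosing a Haar system on $\cD_{M_0\times[0,1]}$ that is constant in the $[0,1]$-direction, the ``pullback constant in $s$'', $\phi\mapsto\widetilde\phi$ with $\widetilde\phi(x,y,s):=\phi(x,y)$, defines a $*$-homomorphism $\tau\colon C^*(M_0\times M_0)\to C^*(\cD_{M_0\times[0,1]})$ (first on smooth compactly supported kernels, then extended by continuity). Put $\iota:=\varepsilon_{\cD,M_0\times[0,1]}\circ\tau\colon C^*(M_0\times M_0)\to C^*(\cD)$.

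Next I would verify the two lifting identities $r_{\cD,M\times\{0\}}\circ\iota=\varepsilon_{\cH,M_0}$ and $r_{\cD,M\times\{1\}}\circ\iota=\varepsilon_{M\times M,M_0}$ as $*$-homomorphisms. For a smooth compactly supported $\phi$ on $M_0\times M_0$, the kernel $\iota(\phi)$ equals $\widetilde\phi$ on $\cD_{M_0\times[0,1]}$ and vanishes elsewhere; its restriction (in the sense of Definition \ref{restrictionmap}) to $\cD|_{M\times\{0\}}\cong\cH$ is the kernel that is $\phi$ on $\cH_{M_0}=M_0\times M_0$ and $0$ on $\cH_{M_1}=G\times M_1\times M_1$, i.e. $\varepsilon_{\cH,M_0}(\phi)$, and its restriction to $\cD|_{M\times\{1\}}\cong M\times M$ is $\phi$ on $M_0\times M_0$ and $0$ on the rest, i.e. $\varepsilon_{M\times M,M_0}(\phi)$. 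Both identities then follow on all of $C^*(M_0\times M_0)$ by density and continuity.

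Finally I would chase the induced diagram on $K_0$. Recall from the construction of $\ind_\cD$ that $r_{\cD,M\times\{0\}}$ is an isomorphism on $K$-theory (the ideal $C^*(\cD|_{M\times(0,1]})$ having vanishing $K$-theory) and that $\ind_\cD=r_{\cD,M\times\{1\}}\circ r_{\cD,M\times\{0\}}^{-1}$. The first lifting identity gives $\iota=r_{\cD,M\times\{0\}}^{-1}\circ\varepsilon_{\cH,M_0}$ on $K_0$, and substituting into the second yields
\[
\ind_\cD\circ\varepsilon_{\cH,M_0}=r_{\cD,M\times\{1\}}\circ r_{\cD,M\times\{0\}}^{-1}\circ\varepsilon_{\cH,M_0}=r_{\cD,M\times\{1\}}\circ\iota=\varepsilon_{M\times M,M_0},
\]
which is the assertion. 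The only delicate step is the first paragraph: one must check that $\cD_{M_0\times[0,1]}$ genuinely fibers over $[0,1]$ with the pair groupoid as each fiber, so that the constant-in-$s$ pullback really lands in $C^*(\cD_{M_0\times[0,1]})$; this rests on $\cD$ being the product of the pair groupoid with $(0,1]$ for $s>0$ and on $\cH_{M_0}$ being the pair groupoid, which together force the groupoid multiplication to preserve the deformation parameter. Everything afterwards is bookkeeping of which restriction and extension maps are $K$-theory isomorphisms, the crucial one—invertibility of $r_{\cD,M\times\{0\}}$—being already available. (The same conclusion can alternatively be reached through the $\varepsilon$--$r$ commuting square used in the proof of Theorem \ref{Main1Pf}, but the explicit lift $\iota$ is the most direct route.)
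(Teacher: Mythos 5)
Your proposal is correct and follows essentially the same route as the paper: you pull $u$ back constantly in the deformation parameter over the open subgroupoid $\cD_{M_0\times[0,1]}\cong (M_0\times M_0)\times[0,1]$, extend by zero to $\cD$, observe that the restrictions at $s=0$ and $s=1$ are $\varepsilon_{\cH,M_0}(u)$ and $\varepsilon_{M\times M,M_0}(u)$, and conclude on $K_0$ using the invertibility of $r_{\cD,M\times\{0\}}$. Your packaging of the lift as a single $*$-homomorphism $\iota$ (rather than working class by class) is a minor presentational refinement, not a different argument.
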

\begin{proof}
Observe that
$$ M_0 \times M_0 \times [0, 1] = \cD _{M_0 \times [0, 1]}$$
is an open subgroupoid.
For any $u \in C^* (M_0 \times M_0)$,
let $\tilde u \in C^* (M_0 \times M_0 \times [0, 1])$ be the pullback of $u$ by the projection to the $M_0 \times M_0$ factor
(i.e., $\tilde u $ is just $u$ on each $M_0 \times M_0 \times \{s\}$).
Extend $\tilde u $ to $\cD$ by 0 and one gets $\varepsilon _{\cD , M_0 \times [0, 1]} (\tilde u)$.
When restricted to $s=0$ and $s=1$, it is clear that $\varepsilon _{\cD , M_0 \times [0, 1]} (\tilde u)$ equals
$$ \varepsilon _{\cH , M_0 } (u) \,\,\text{ and } \,\,\varepsilon_{M \times M, M_0} (u), $$
respectively.
Passing to $K$-theory, one obtains
$$ r _{\cD, M \times \{0\} } \circ \varepsilon _{\cD , M_0 \times [0, 1]} ([\tilde u]) = \varepsilon _{\cH , M_0 } ([u]),$$
and
$$r _{\cD, M \times \{1\} } \circ \varepsilon _{\cD , M_0 \times [0, 1]} ([\tilde u]) = \varepsilon_{M \times M, M_0} ([u]), $$
for any class $[u] \in K_\bullet (M_0 \times M_0)$.
Hence, we have
\begin{align*}
\ind _\cD \circ \varepsilon _{\cH , M_0 } ([u])
=& r _{\cD, M \times \{1\} } \circ r_{\cD, M \times \{0\} } ^{-1} \circ r _{\cD, M \times \{0\} }
\circ \varepsilon _{\cD , M_0 \times [0, 1]} ([\tilde u]) \\
=& \varepsilon_{M \times M, M_0} ([u]), \qedhere
\end{align*}
which completes the proof.
\end{proof}


\medskip
\section{Fredholm and $K$-theoretic index of (fully) elliptic operators on boundary groupoids}\label{FredholmFully}

{In this section, we combine the map in Theorem \ref{Main1} and the Atiyah-Singer index formula to compute the index.
Before doing that, we briefly recall the Atiyah-Singer index formula,
in particular, we shall use the version appearing in \cite[Theorem 5.1]{Pflaum;GpoidLocalizedInd}.

Let $\cG \tto M$ be a Lie groupoid and $A$ the associated Lie algebroid of $\cG$.
Define the line bundle
$$ L := \wedge ^{top} T^* M \otimes \wedge ^{top} A .$$
We suppose further that $\cG$ is unimodular, i.e., there exists an invariant nowhere vanishing section $\Omega $ of $L$.
Then the characteristic map $\chi _\Omega $ defines a map from the groupoid cohomology to cyclic cohomology by \cite[Equation (5)]{Pflaum;GpoidLocalizedInd}
\begin{eqnarray*}
 &&\chi _\Omega (\varphi _1 \otimes \cdots \otimes \varphi _k)(a_0 \otimes \cdots \otimes a_k)\\
&:=& \int_\rM \Big( \int _{g_0 \cdots g_k = 1_x} a_0 (g_0) \varphi _1 (g_1) a_1 (g_1) \cdots \varphi _k (g_k) a_k (g_k) \Big) \Omega (x)
\end{eqnarray*}
(on the level of cochains), for any groupoid cocycle $\varphi _1 \otimes \cdots \otimes \varphi _k$.
Then one combines $\chi _\Omega$ with the canonical pairing between cyclic homology and cohomology,
the Connes-Chern character map, which maps from $K_0 (\Psi _c ^{-\infty} (\cG))$ to the cyclic homology of $\Psi _c ^{-\infty} (\cG)$,
and the van Est map,
to obtain a pairing
$$\langle [\psi ], \alpha \rangle _\Omega \in \bbC $$
for any $[\psi ] \in K_0 (\Psi _c ^{-\infty} (\cG)), \alpha \in H^\bullet (\rA , \bbC)$.

Then the main result of \cite[Theorem 5.1]{Pflaum;GpoidLocalizedInd} states that such pairing can be computed by the following formula.

\begin{lem}
\label{PPTMain}
For any elliptic pseudo-differential operator $\varPsi$ and $ \alpha \in H^{2 k} (\rA , \bbC)$, we have
$$ \langle \ind (\varPsi), \alpha \rangle _\Omega
= (2 \pi \sqrt {-1})^{-k} \int _{\rA'} \big \langle \pi^* \alpha \wedge \hat A (\rA') \wedge \ch (\sigma [\varPsi ]) , \Omega _{\pi^! \rA} \big \rangle ,$$
where $\hat A (\rA') \in H ^\bullet (T \rM)$ is the $A$-hat genus, 
$\ch : K _0 (C (\rA')) \to H ^{even} (T \rM)$ is the Chern Character (which can be defined by, say, the Chern-Weil construction),
and $\pi^! A$ is the pull-back Lie algebroid of $A$ along the projection $\pi: A' \rightarrow M$.
\end{lem}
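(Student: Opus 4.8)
The displayed formula is precisely \cite[Theorem 5.1]{Pflaum;GpoidLocalizedInd}, so for us the proof amounts to invoking that reference; nonetheless, let me indicate the shape of the argument for the reader's orientation. The first observation is that the pairing $\langle \ind(\varPsi), \alpha\rangle_\Omega$ depends on $\varPsi$ only through the class $[\sigma(\varPsi)] \in K_0(C(A'))$ of its principal symbol: indeed $\ind(\varPsi)$ is the image of $[\sigma(\varPsi)]$ under the analytic index map, which --- exactly as with $\ind_{\cT(\cG)}$ in Section \ref{basics} --- is realized by deforming through the adiabatic groupoid $\cT(\cG) = \cG \times (0,1] \sqcup A \times \{0\}$ and identifying $C^*(A) \cong C(A')$. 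So the problem is to evaluate one fixed pairing on $K_0(C(A'))$.

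The second step is to transport that pairing along the same deformation. The cyclic cocycle on $\Psi_c^{-\infty}(\cG)$ entering the definition of $\langle\cdot, \alpha\rangle_\Omega$ is $\chi_\Omega$ applied to a groupoid cohomology class corresponding to $\alpha$ under the van Est map; one checks that it extends to a cocycle over the family $\cT(\cG)$, so that the pairing computed on $\cG$ at the endpoint $s = 1$ agrees with the one computed on the bundle of abelian groups $A$ at $s = 0$. On that side the pairing is a fibrewise integral over $A'$ against the invariant density, which is the right-hand side of the formula once the integrand is identified.

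The third step is the local index computation identifying that integrand. I would represent the Connes--Chern character of $[\sigma(\varPsi)]$ by an explicit heat-kernel (JLO-type) cocycle, reduce to a Laplace-type model, and then apply Getzler rescaling in the fibre directions of $\pi : A' \to M$. The rescaling limit produces $\hat A(A')$ and $\ch(\sigma[\varPsi])$, the characteristic map $\chi_\Omega$ supplies the contraction $\langle \cdot, \Omega_{\pi^! A}\rangle$ against the invariant density, the representative of $\alpha$ contributes the factor $\pi^*\alpha$, and the normalization $(2\pi\sqrt{-1})^{-k}$ is the Jacobian of the rescaling on the degree-$2k$ part; assembling these yields the stated identity. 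As a consistency check, for $k = 0$ (so $\alpha = 1$) this specializes to Connes' index theorem for the (pair) groupoid, the form of Atiyah--Singer used in the last section.

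The main obstacle, were one to carry this out from scratch, is exactly this last step in the groupoid setting: one must control the short-time asymptotics of the relevant heat kernels uniformly in the deformation parameter $s$, verify that the van Est and characteristic maps are compatible with Getzler rescaling so that no spurious terms survive, and ensure convergence of the resulting integral over the non-compact total space $A'$, with the Thom form $\Omega_{\pi^! A}$ providing the decay. This is the technical heart of \cite{Pflaum;GpoidLocalizedInd}, and we simply quote their conclusion.
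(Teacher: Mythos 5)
Your proposal takes essentially the same route as the paper: the paper states Lemma \ref{PPTMain} as a direct quotation of \cite[Theorem 5.1]{Pflaum;GpoidLocalizedInd} and supplies no proof of its own, which is exactly what you do. Your orientation sketch (JLO cocycles and Getzler rescaling) does not reflect how that reference actually argues --- Pflaum--Posthuma--Tang proceed via the van Est map and algebraic index theory for the deformation quantization of $\rA'$ --- but since both you and the paper ultimately just cite the result, this is immaterial.
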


Let us consider the particular case when $\cG = \rM \times \rM$ is the pair groupoid, hence $A = T \rM$, and $L$ is trivial.
One obtains an invariant nowhere vanishing section $\Omega $ of $L$ by considering
\begin{equation}
\label{Section}
\Omega := C d x^1 \wedge \cdots \wedge d x^{\dim \rM}
\otimes \tfrac {\partial }{\partial x^1} \wedge \cdots \wedge \tfrac {\partial }{\partial x^{\dim \rM}}
\end{equation}
locally, for any constant $C \neq 0$.
Also, the constant function $1$ obviously defines a class $[1] \in H^0 (\rA , \bbC)$.
Applying Lemma \ref{PPTMain},
one gets
\begin{equation}
\label{PPTCor1}
\langle \ind (\varPsi), 1 \rangle _\Omega
= \int _{T^* \rM} \langle \hat A (T ^* \rM) \wedge \ch (\sigma [\varPsi ]) , \Omega _{\pi^! T \rM} \rangle .
\end{equation}
Since $\ind (\varPsi)$ is an integer, by choosing an appropriate normalization $C$ for $\Omega$, we have
$$ \ind (\varPsi) = \langle \ind (\varPsi), 1 \rangle _\Omega, $$
and the identity \eqref{PPTCor1} simplifies to
\begin{equation}
\label{PPTCor2}
\ind (\varPsi)
= \int _{T^* \rM} \langle \hat A (T ^* \rM) \wedge \ch (\sigma [\varPsi ]) , \Omega _{\pi^! T \rM} \rangle.
\end{equation}

\begin{rem}
In order to apply Lemma \ref{PPTMain}, one needs a non-trivial $\cG$-invariant Lie algebroid class.
By \cite{Lu;Modular} the construction of such class is non-trivial,
because the Lie algebroid associated to a boundary groupoid is in general not unimodular.
However, in the following subsections we shall use a deformation from the pair groupoid to simplify the problem to that of a regular groupoid,
which is clearly unimodular,
so that the Lemma \ref{PPTMain} can be applied.
\end{rem}

In the sequel, we focus on boundary groupoids with two orbits and exponential isotropy, i.e., of the form
\begin{equation}\label{GpdForm}
\cH := M_0 \times M_0 \sqcup G \times M_1 \times M_1 \rightrightarrows M = M_0\sqcup M_1,
\end{equation}
where $G$ is an exponential Lie group, and assume that a deformation from the pair groupoid exists for $\cH$.

\subsection{The odd co-dimension case}
Recall that if the isotropy subgroup $G$ is exponential and of odd dimension $\geq 3$, then
\begin{equation}
K _0 (M_0 \times M_0 ) \cong \bbZ \xrightarrow{\varepsilon _{\cH , M_0}} C^* (\cH) \cong \bbZ
\end{equation}
is an isomorphism.
Identifying $K _0 (M_0 \times M_0 ) \cong K _0 (M \times M)$ using $\varepsilon _{M \times M, M_0}$,
one sees from Theorem \ref{Main2} that $\ind _\cD $ is the inverse of $\varepsilon _{\cH , M_0}$.

Given any elliptic pseudodifferential operator $\varPsi$,
in order to compute the integer 
$$\ind (\varPsi) = \ind _{\cT (\cH)} (\partial [\sigma (\varPsi)]),$$
we apply Theorem \ref{Main1} to obtain
\begin{equation}\label{AS1}
\ind (\varPsi) = \ind _{\cT (\cH)} (\partial [\sigma (\varPsi)]) = \ind _{\cT (M \times M)} (\partial [\sigma (\varPsi)]),
\end{equation}
where we regard $\partial [\sigma ] \in K_0 (C^* (T M))$ on the rightmost expression.
Observe that $\ind _{\cT (M \times M)} (\partial [\sigma ])$ is just the analytic index of the pair groupoid,
Hence we apply Lemma \ref{PPTMain} (with the normalization of \eqref{PPTCor2}) to conclude  the following theorem.

\begin{thm}\label{OddCase}
Suppose that $\cH \tto M $ is a boundary groupoid with two orbits and exponential isotropy with $\dim G \geqslant 3$ odd,
and a deformation from the pair groupoid exists for $\cH$.
Let $\varPsi$ be an elliptic pseudo-differential operator on $\cH$.
Then one has the index formula
\begin{equation}\label{AS}
\ind (\varPsi) = \ind _{\cT (M \times M)} (\partial [\sigma (\varPsi) ])
= \int _{T^* M} \langle \hat A (T ^* M) \wedge \ch (\sigma [\varPsi ]) , \Omega _{\pi^! T M} \rangle.
\end{equation}
\end{thm}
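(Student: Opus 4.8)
The plan is to reduce the index computation on $\cH$ to the analytic index on the pair groupoid $M \times M$, and then invoke the Atiyah--Singer formula in the form \eqref{PPTCor2}. The starting point is the observation, already recorded in the preceding discussion, that when $\dim G \geqslant 3$ is odd the extension map $\varepsilon_{\cH, M_0}$ is an isomorphism $\bbZ \cong K_0(C^*(M_0 \times M_0)) \to K_0(C^*(\cH)) \cong \bbZ$, and that by Theorem \ref{Main2Pf} the composition $\ind_\cD \circ \varepsilon_{\cH, M_0}$ equals $\varepsilon_{M \times M, M_0}$; since the latter is also an isomorphism in this case, $\ind_\cD$ is precisely the inverse of $\varepsilon_{\cH, M_0}$ under the identification $K_0(C^*(M_0 \times M_0)) \cong K_0(C^*(M \times M))$.

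Next I would run the chain of equalities \eqref{AS1}. Given an elliptic $\varPsi \in \Psi^m(\cH)$, its analytic index is $\ind(\varPsi) = \ind_{\cT(\cH)}(\partial[\sigma(\varPsi)]) \in K_0(C^*(\cH))$, where $\partial[\sigma(\varPsi)] \in K_0(C^*(A))$ is built from the principal symbol via the connecting map of $0 \to C(A') \to C(\bar A') \to C(SA') \to 0$ as recalled in Section \ref{basics}. Now apply Theorem \ref{Main1Pf}: the square there, with $\cG = \cH$, gives $\ind_\cD \circ \ind_{\cT(\cH)} = \ind_{\cT(M \times M)} \circ (\text{the vertical isomorphism } K_0(C^*(A)) \cong K_0(C^*(TM)))$. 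Since $\ind_\cD$ is an isomorphism here, applying it to $\ind(\varPsi)$ and reading the diagram backwards identifies $\ind(\varPsi)$ with $\ind_{\cT(M \times M)}(\partial[\sigma(\varPsi)])$, where now $\partial[\sigma(\varPsi)]$ is regarded in $K_0(C^*(TM))$ via the canonical identification $A \cong TM$ from Proposition \ref{LieAlgIso}. This is legitimate because both sides are integers in $\bbZ$ and the maps involved are isomorphisms of $\bbZ$, so no information is lost; one should just check that the class $\partial[\sigma(\varPsi)]$ transported along $A \cong TM$ is the symbol class used on the pair groupoid side, which is exactly the content of the last paragraph of the proof of Theorem \ref{Main1Pf}.

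Finally, $\ind_{\cT(M \times M)}(\partial[\sigma(\varPsi)])$ is by definition Connes' analytic index on the pair groupoid $M \times M$, which is unimodular with the trivial line bundle $L$ and the invariant section $\Omega$ of \eqref{Section}. Applying Lemma \ref{PPTMain} with $\alpha = [1] \in H^0(TM, \bbC)$ and the normalization of $C$ that makes $\ind(\varPsi) = \langle \ind(\varPsi), 1 \rangle_\Omega$, one obtains exactly \eqref{PPTCor2}, i.e.
$$\ind(\varPsi) = \int_{T^*M} \big\langle \hat A(T^*M) \wedge \ch(\sigma[\varPsi]), \Omega_{\pi^! TM} \big\rangle,$$
which is the asserted formula \eqref{AS}. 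The only genuinely delicate point is the bookkeeping in the middle step: one must make sure that the isomorphism $A \cong TM$ coming from parallel transport (Proposition \ref{LieAlgIso}) intertwines the symbol classes and the two connecting maps $\partial$ on the nose, and that the diagram of Theorem \ref{Main1Pf} is being traversed in the correct direction; everything else is a direct appeal to the cited results.
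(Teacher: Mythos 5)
Your proposal is correct and follows essentially the same route as the paper: use the odd-codimension isomorphism $\varepsilon_{\cH,M_0}$ together with Theorem \ref{Main2Pf} to see that $\ind_\cD$ inverts it, transport $\ind(\varPsi)=\ind_{\cT(\cH)}(\partial[\sigma(\varPsi)])$ through the commutative square of Theorem \ref{Main1Pf} (with $A\cong TM$ as in Proposition \ref{LieAlgIso}) to the analytic index of the pair groupoid, and evaluate the latter by Lemma \ref{PPTMain} with $\alpha=[1]$ and the normalization of \eqref{PPTCor2}. No gaps to flag.
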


\begin{remark}
By Proposition 3.3, we see that a deformation from the pair groupoid always exists for submanifold groupoids.
Because renormalizable boundary groupoids form a special class of submanifold groupoids, 
Theorem \ref{OddCase} applies and implies that the index is given only by the Atiyah-Singer term.
Comparing with the results of \cite{Bohlen;Rev1,Bohlen;HeatIndex} (for higher co-dimension) and also \cite{QS;RenormInd},
one sees that the $\eta$-term vanishes for elliptic pseudo-differential operators on $\cG$.
Therefore we give a completely new proof that generalizes these previous results.
\end{remark}

\begin{exam}
Recall \cite[Section 6]{Nistor;GeomOp} that for any Lie groupoid the vertical de Rham operator $d$ is an (invariant) groupoid differential operator.
Its vector representation computes its Lie algebroid cohomology.
Fixing a Riemannian metric on $\rA$, one can then define the formal adjoint $d^*$ of $d$, which leads one to construct the Euler operator $d + d^*$,
and also the signature operator.
More generally, one can construct generalized Dirac operators.
 
For illustration here, we only consider the case when the Lie algebroid $\rA$ is spin,
and $\cH$ is of the form \eqref{GpdForm} with $M_1$ of odd codimension, where a deformation from the pair groupoid exists.
Let $S_\rA$ be the spinor bundle of $\rA$.
Following \cite[Section 6]{Nistor;GeomOp}, one constructs the (groupoid) Dirac operator $D _\rA$ associated with the Levi-Civita connection.
One the other hand, Lemma \ref{LieAlgIso} implies that $M$ is also a spin manifold, 
and the spinor bundle $S _{T M}$ is isomorphic to $S _\rA$.
Hence it is standard to construct the (groupoid) Dirac operator $D$ associated with the Levi-Civita connection.
One sees from its explicit construction that the principal symbol of $D _A$ is equal to that of $D$
(however $D_\rA$ and $D$ are very different as groupoid differential operators).
Hence by Theorem \ref{Main1Pf}, the index of $D _\rA$ is the same as the index of $D$.
Theorem \ref{OddCase} then gives an explicit formula for $\ind (D _\rA)$.
Lastly recall \cite[Section 3]{BGV} that one can generalize the construction of the Dirac operator by tensoring  $S_\rA$ with a vector bundle $W$,
and in this case the Chern form $\ch (\sigma [D _\rA])$ can be written explicitly using the twisting curvature.
\end{exam}

\smallskip
\subsection{The even co-dimension case}
On the other hand, if $\rG$ is of even dimension, one has short exact sequence
\begin{equation}
0 \to K _0 (M_0 \times M_0 ) \cong \bbZ \xrightarrow{\varepsilon _{\cH , M_0}} K_0 (C^* (\cH))
\xrightarrow{r _{\cH, M_1}} K_0 (C^* (\cH_{M_1})) \cong \bbZ  \to 0
\end{equation}
and Theorem \ref{Main2} canonically identifies
$$ K_0 (C^* (\cH)) \cong \bbZ \oplus \bbZ $$
via $\ind _\cD \oplus r _{\cH, M_1} $.
For the $\ind _\cD$ component, the arguments for the odd case apply without any change,
and the result is the same Atiyah-Singer integral \eqref{AS}.
To compute
$$ r _{\cH, M_1} \circ \ind _{\cT (\cH)} (\partial [\sigma ]), $$
we observe that restriction of $\cT (\cH)$ to $M_1 \times [0, 1]$ is just $\cT (\cH_1)$ which is the adiabatic groupoid of $\cH_1 = \rG \times M_1 \times M_1$,
hence we obtain the commutative diagram
\begin{equation}
\xymatrix@C=7em{
C^* (\rA) \ar[d]^{r _{\rA , M_1}}
& C^* (\cT (\cH)) \ar[l]_{r_{\cT (\cH), M \times \{0 \}}} \ar[r]^{r_{\cT (\cH), M \times \{1 \}}} \ar[d]^{r _{\cT (\cH), M_1 \times [0, 1]}}
& C^* (\cH) \ar[d]^{r _{\cH, M_1}} \\
C^* (\rA \vert _{M_1}) & C^* (\cT (\cH_1)) \ar[l]^{r_{\cT (\cH_1), M_1 \times \{0 \}}} \ar[r]_{r_{\cT (\cH_1), M \times \{1 \}}} & C^* (\cH_1),
}
\end{equation}
which implies
$$ r _{\cH, M_1} \circ \ind _{\cT (\cH)} (\partial [\sigma ]) = \ind _{\cT (\cH_1)} (\partial [\sigma] \vert _{M_1}). $$
The right hand side of the above can, in turn be computed by Lemma \ref{PPTMain} with $\cH_1 = M_1 \times M_1 \times \rG$ as the groupoid:
\begin{equation}
\ind _{\cT (\cH_1)} (\partial [\sigma] \vert _{M_1})
= \int _{T^* M_1 \times \mathfrak g} \big \langle \hat A (T ^* M _1 \oplus \mathfrak g)
\wedge \ch (\sigma [\varPsi ] \vert _{M_1}) , \Omega' _{\pi^! T M_1} \rangle ,
\end{equation}
where $\Omega ' \in \Gamma ^\infty (\wedge ^{top} \mathfrak g \otimes \wedge ^{top} T^* M_1 \otimes \wedge ^{top} T M_1)$
is a suitably normalized, invariant nowhere vanishing section defined in the same manner as Equation \eqref{Section},
and $\mathfrak{g}$ is the Lie algebra of the isotropy group $G$.

To conclude, we have arrived at an index formula for elliptic pseudodifferential operators on $\cH$.

\begin{thm}\label{EvenCase}
Suppose that $\cH \tto M$ is a boundary groupoid with two orbits and exponential isotropy,
i.e., of the form (\ref{GpdForm}) with $\dim G $ even,
and a deformation from the pair groupoid exists for $\cH$.
Let  $\varPsi$ be an elliptic pseudo-differential operator on $\cH$.
One has the index formula
\begin{align*}
\ind (\varPsi)
=& \int _{T^* M} \big\langle \hat A (T ^* M) \wedge \ch (\sigma [\varPsi ]) , \Omega _{\pi^! T M} \big \rangle \\ \nonumber
& \bigoplus \int _{T^* M_1 \times \mathfrak g} \big \langle \hat A (T ^* M_1 \oplus \mathfrak g)
\wedge \ch (\sigma [\varPsi ] \vert _{M_1}) , \Omega' _{\pi^! T M_1} \big \rangle \in \bbZ \oplus \bbZ \cong K_0 (C^* (\cH)).
\end{align*}
\end{thm}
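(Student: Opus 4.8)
The plan is to reduce the computation to the two summands of $K_0(C^*(\cH)) \cong \bbZ \oplus \bbZ$ that are made explicit by Theorem \ref{Main2Pf}, and then to evaluate each summand by combining Theorem \ref{Main1Pf} with the Atiyah-Singer formula of Lemma \ref{PPTMain}. First I would recall, from the six-term sequence \eqref{Comp2} together with Connes' Thom isomorphism, that in the even codimension case the maps $\varepsilon_{\cH,M_0}$ and $r_{\cH,M_1}$ fit into a short exact sequence $0 \to K_0(C^*(M_0\times M_0)) \to K_0(C^*(\cH)) \to K_0(C^*(\cH_{M_1})) \to 0$ of free abelian groups of rank one. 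By Theorem \ref{Main2Pf} one has $\ind_\cD \circ \varepsilon_{\cH,M_0} = \varepsilon_{M\times M,M_0}$, which is an isomorphism onto $K_0(C^*(M\times M)) \cong \bbZ$; hence $\ind_\cD$ splits the sequence, and the pair $(\ind_\cD, r_{\cH,M_1})$ is the claimed isomorphism $K_0(C^*(\cH)) \xrightarrow{\cong} \bbZ\oplus\bbZ$. It therefore suffices to compute $\ind_\cD(\ind(\varPsi))$ and $r_{\cH,M_1}(\ind(\varPsi))$ separately, where $\ind(\varPsi) = \ind_{\cT(\cH)}(\partial[\sigma(\varPsi)])$ as recalled in Section \ref{basics}.

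For the first component I would invoke the commutative square of Theorem \ref{Main1Pf}: under the identification $A(\cH)\cong TM$ of Proposition \ref{LieAlgIso}, one gets $\ind_\cD \circ \ind_{\cT(\cH)} = \ind_{\cT(M\times M)}$ on $K_0(C^*(TM))$, so $\ind_\cD(\ind(\varPsi)) = \ind_{\cT(M\times M)}(\partial[\sigma(\varPsi)])$ is exactly the analytic index on the pair groupoid $M\times M$. Since $M\times M$ is unimodular with the section \eqref{Section}, Lemma \ref{PPTMain} in its normalized form \eqref{PPTCor2} evaluates this to $\int_{T^*M}\langle \hat A(T^*M)\wedge\ch(\sigma[\varPsi]),\Omega_{\pi^! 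TM}\rangle$, which is the first integral in the statement.

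For the second component I would use that $M_1$ is invariant for $\cT(\cH)$ and that the restriction $\cT(\cH)|_{M_1\times[0,1]}$ is canonically the adiabatic groupoid $\cT(\cH_1)$ of $\cH_1 = G\times M_1\times M_1$. The resulting commutative diagram of restriction maps, with the $t=0$ restrictions inverted (they are $K$-theory isomorphisms exactly as in the proof of Theorem \ref{Main1Pf}), yields $r_{\cH,M_1}(\ind_{\cT(\cH)}(\partial[\sigma(\varPsi)])) = \ind_{\cT(\cH_1)}(\partial[\sigma(\varPsi)]|_{M_1})$. Because $G$ is exponential, hence solvable, hence unimodular, the product groupoid $\cH_1 = M_1\times M_1\times G$ is unimodular, with an invariant nowhere vanishing section $\Omega'$ of $\wedge^{top}\mathfrak g \otimes \wedge^{top}T^*M_1\otimes\wedge^{top}TM_1$ built as in \eqref{Section}; applying Lemma \ref{PPTMain} to $\cH_1$ with Lie algebroid $TM_1\oplus\mathfrak g$ and the class $[1]\in H^0(TM_1\oplus\mathfrak g,\bbC)$ gives $\int_{T^*M_1\times\mathfrak g}\langle \hat A(T^*M_1\oplus\mathfrak g)\wedge\ch(\sigma[\varPsi]|_{M_1}),\Omega'_{\pi^! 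TM_1}\rangle$. Assembling the two components under the isomorphism of the first paragraph produces the stated formula.

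The genuinely hard work lies not in this theorem but in Theorems \ref{Main1Pf} and \ref{Main2Pf}, which are already available. Within this proof the two points that require care are: first, that the adiabatic construction commutes with restriction to the invariant submanifold $M_1$, so that $\cT(\cH)|_{M_1\times[0,1]}\cong\cT(\cH_1)$ and the diagram chase identifying $r_{\cH,M_1}\circ\ind_{\cT(\cH)}$ with $\ind_{\cT(\cH_1)}$ is legitimate — one must check that order $-\infty$ operators restrict correctly and that the $t=0$ faces induce $K$-isomorphisms; and second, that $\cH_1$ is unimodular and that $\partial[\sigma(\varPsi)]|_{M_1}$ is precisely the symbol class to which Lemma \ref{PPTMain} is to be applied, so that pairing with $[1]$ returns the integer giving that component of the index. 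Neither of these is a serious obstacle given the machinery already developed.
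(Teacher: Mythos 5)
Your proposal follows the paper's proof essentially step for step: the identification of $K_0 (C^* (\cH)) \cong \bbZ \oplus \bbZ$ via $\ind _\cD \oplus r _{\cH , M_1}$ (using Theorem \ref{Main2Pf} and the six-term sequence), the evaluation of the $\ind _\cD$-component by repeating the odd-codimension argument (Theorem \ref{Main1Pf} together with the normalized formula \eqref{PPTCor2}), and the evaluation of the $r _{\cH , M_1}$-component by observing that $\cT (\cH)$ restricted to $M_1 \times [0,1]$ is $\cT (\cH _1)$ for $\cH _1 = G \times M_1 \times M_1$ and then applying Lemma \ref{PPTMain} to $\cH _1$; this is exactly the paper's route. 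The one point to correct is your justification ``exponential, hence solvable, hence unimodular'': solvability does not imply unimodularity (the $ax+b$ group is exponential of dimension $2$ with nontrivial modular function), so unimodularity of $G$, and hence invariance of a section of $\wedge^{\mathrm{top}}\mathfrak g \otimes \wedge^{\mathrm{top}} T^* M_1 \otimes \wedge^{\mathrm{top}} T M_1$ under the groupoid action (which acts on the $\wedge^{\mathrm{top}}\mathfrak g$ factor through $\det \mathrm{Ad}$), cannot be deduced this way. The paper does not argue through unimodularity of $G$ at all; it simply takes the invariant nowhere vanishing section $\Omega'$, constructed as in \eqref{Section}, as given. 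So apart from replacing that false implication by the paper's (asserted) existence of $\Omega'$, your argument coincides with the published proof.
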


\subsection{The Fredholm index for fully elliptic operators}
Lastly, recall the definition of fully elliptic operators in Definition \ref{FullyElliptic}. 
We suppose in this subsection that $\varPsi$ is a fully elliptic operator on $\cH \tto M$, where $\cH$ is a boundary groupoid with two orbits and exponential isotropy.

\begin{corollary}
The Fredholm index of $\varPsi$ is
$$ \ind _F (\varPsi ) = \int _{T^* M} \big \langle \hat A (T ^* M) \wedge \ch (\sigma [\varPsi ]) , \Omega _{\pi^! T M} \big \rangle .$$
\end{corollary}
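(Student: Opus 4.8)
The plan is to compute $\ind _F (\varPsi)$ by identifying it with the image of the analytic index $\ind (\varPsi) \in K_0 (C^* (\cH))$ under the inverse of the extension map $\varepsilon _{\cH, M_0}$, and then to invoke Theorem \ref{Main2Pf} together with Theorems \ref{OddCase} and \ref{EvenCase}. First recall that $\cH$ is a Fredholm groupoid: since $G$ is exponential it is amenable, hence $G \times M_1 \times M_1$ is topologically amenable and \cite[Theorem 4.3]{Come19} applies. Consequently a fully elliptic operator $\varPsi$, which we may assume to be of order $0$, is Fredholm on the $\bs$-fibres over $M_0$, and $\ind _F (\varPsi)$ is by definition the image of the invertible joint symbol class $[\bj _1 (\varPsi)] \in K_1 (\fU (\cH)/\cJ _1)$ under the connecting homomorphism $\partial : K_1 (\fU (\cH)/\cJ _1) \to K_0 (\cJ _1)$, where $\cJ _1 = \ker \bj _1$. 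Since $\sigma (P) = 0$ forces $P \in C^* (\cH)$ and the further condition $P|_{\bar \cH _1} = 0$ then forces $P$ into $\ker ( r _{\cH, M_1} : C^* (\cH) \to C^* (\bar \cH _1))$, one has $\cJ _1 = C^* (\cH _{M_0}) = C^* (M_0 \times M_0) \cong \cK$, whose inclusion into $\fU (\cH)$ factors through $\varepsilon _{\cH, M_0}$; in particular $\ind _F (\varPsi) \in K_0 (\cJ _1) \cong \bbZ$.

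Next I would check that $\ind (\varPsi) := \ind _{\cT (\cH)} (\partial [\sigma (\varPsi)])$ lies in the image of $\varepsilon _{\cH, M_0}$, and that under $\varepsilon _{\cH, M_0}$ it corresponds to $\ind _F (\varPsi)$. The first point is automatic when $\dim G$ is odd, since $\varepsilon _{\cH, M_0}$ is then an isomorphism; when $\dim G$ is even, the image of $\varepsilon _{\cH, M_0}$ is $\ker r _{\cH, M_1}$, and naturality of the connecting homomorphism with respect to $r _{\cH, M_1}$ together with the fact that $\sigma (\varPsi)|_{M_1}$ lifts to the invertible element $\varPsi |_{\bar \cH _1} \in \fU (\bar \cH _1)$ (by full ellipticity) shows $r _{\cH, M_1} (\ind (\varPsi)) = 0$. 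The second point follows from the inclusions $\cJ _1 \subset C^* (\cH) \subset \fU (\cH)$ and the compatibility of the two connecting homomorphisms with the quotient map $\fU (\cH)/\cJ _1 \to \fU (\cH)/C^* (\cH)$, which yields $\varepsilon _{\cH, M_0} (\ind _F (\varPsi)) = \ind (\varPsi)$.

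Finally I would apply the deformation index map. By Theorem \ref{Main2Pf}, $\ind _\cD \circ \varepsilon _{\cH, M_0} = \varepsilon _{M \times M, M_0}$, and since $M_0$ is open and dense in $M$ the latter map is the canonical identification $K_0 (C^* (M_0 \times M_0)) \cong \bbZ \cong K_0 (C^* (M \times M))$; hence $\ind _\cD (\ind (\varPsi)) = \ind _F (\varPsi)$ as integers. On the other hand, $\ind _\cD (\ind (\varPsi))$ is precisely the quantity computed in Theorem \ref{OddCase} (when $\dim G$ is odd) and in the first summand of Theorem \ref{EvenCase} (when $\dim G$ is even): identifying $A \cong T M$ via Proposition \ref{LieAlgIso} and applying Theorem \ref{Main1Pf}, it equals $\ind _{\cT (M \times M)} (\partial [\sigma (\varPsi)])$, which Lemma \ref{PPTMain} in the normalized form \eqref{PPTCor2} evaluates to $\int _{T^* M} \langle \hat A (T ^* M) \wedge \ch (\sigma [\varPsi ]) , \Omega _{\pi^! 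T M} \rangle$. Chaining these equalities gives the stated formula. The main obstacle is really the first two paragraphs, i.e. the Fredholm-groupoid bookkeeping: pinning down $\cJ _1 \cong C^* (M_0 \times M_0)$ with the ideal inclusion realizing $\varepsilon _{\cH, M_0}$, and the naturality of the connecting homomorphisms; this is standard (cf. \cite{Nistor;Fredholm1, Nistor;Fredholm2, Come19}), and everything else is a formal consequence of Theorems \ref{Main1Pf} and \ref{Main2Pf} and the Pflaum--Posthuma--Tang formula. In particular, as in the remark after Theorem \ref{OddCase}, this recovers by purely $K$-theoretic means the vanishing of the $\eta$-term established in \cite{QS;RenormInd} via renormalized traces.
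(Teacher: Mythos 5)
Your proposal is correct and takes essentially the same route as the paper: the paper's proof is precisely the chain $\ind _F (\varPsi) = \ind _\cD \circ \varepsilon _{\cH , M_0} (\ind _F (\varPsi)) = \ind _\cD \big( \ind _{\cT (\cH)} (\partial [\sigma (\varPsi)]) \big) = \int _{T^* M} \big\langle \hat A (T^* M) \wedge \ch (\sigma [\varPsi ]) , \Omega _{\pi ^! T M} \big\rangle$, justified by Theorems \ref{Main2Pf} and \ref{Main1Pf} together with \cite{So;Ktheory}. The only difference is that your first two paragraphs supply the details (the identification $\cJ _1 \cong C^* (M_0 \times M_0) \cong \cK$ and the naturality of the connecting homomorphisms relating the Fredholm index to $\varepsilon _{\cH , M_0}$ and the $K_0 (C^* (\cH))$-index) that the paper simply cites from \cite{So;Ktheory}.
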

\begin{proof}
Recall that $\varPsi$ is invertible modulo $C^* (M _0 \times M_0) \cong \cK$ and its Fredholm index lies in $K_0 (C^* (M_0 \times M_0)) $.
By Theorems \ref{Main2}, \ref{Main1} and \cite{So;Ktheory} we have
\begin{align*}
\ind _F (\varPsi ) =& \ind _\cD \circ \varepsilon _{\cH, M_0} (\ind _F (\varPsi)) \\
=& \ind _\cD (\ind _{\cT (\cH)} (\sigma (\varPsi ))) \\
=& \int _{T^* M} \big \langle \hat A (T ^* M) \wedge \ch (\sigma [\varPsi ]) , \Omega _{\pi^! T M} \big \rangle. \qedhere
\end{align*}
\end{proof}

\begin{remark}
One can replace the pair groupoid in Definition \ref{deformation} by other groupoids. 
Most of the arguments in Section 4 still works in a more general setting. 
It would be interesting to see which class of groupoids can arise from this kind of (non-trivial) deformations, 
and what index formulae one can obtain.
\end{remark}

\bigskip


\def\cprime{$'$} \def\cprime{$'$}

\end{document}